\documentclass[10pt
]{amsart}
\usepackage{amsmath, amsthm, amssymb,mathrsfs,amsfonts,}
\usepackage{mathtools}
\usepackage{relsize}
\usepackage{enumitem}
\usepackage[utf8]{inputenc}
\usepackage{pgffor}
\usepackage{color}
\usepackage{epstopdf}
\usepackage{graphicx}

\usepackage{varwidth}

\usepackage{verbatim}

\usepackage[
]
{hyperref}
\hypersetup{
	colorlinks=false,       
	linkcolor=blue,          
	citecolor=red,        
	filecolor=magenta,      
	urlcolor=cyan           
}

\vfuzz2pt 
\hfuzz2pt 
\newtheorem{thm}{Theorem}[section]
\newtheorem{coro}[thm]{Corollary}
\newtheorem{lem}[thm]{Lemma}
\newtheorem{prop}[thm]{Proposition}

\theoremstyle{definition}
\newtheorem{Def}[thm]{Definition}
\theoremstyle{remark}
\newtheorem{rk}[thm]{Remark}

\numberwithin{equation}{section}

\definecolor{ao}{rgb}{0, 0.5, 0}

\newcommand{\supp} {\text{supp\! }}

\newcommand{\fs}{e^{it(-\Delta)^{\alpha/2}}}

\begin{document}

\title[Pointwise convergence]{Pointwise convergence of sequential  Schr\"odinger means}

\author{Chu-Hee Cho, Hyerim Ko, Youngwoo Koh and Sanghyuk Lee}
\address{Department of Mathematical Sciences and RIM, Seoul National University, Seoul 08826, Republic of Korea}
\email{akilus@snu.ac.kr}
\email{kohr@snu.ac.kr}
\email{shklee@snu.ac.kr}

\address{Department of Mathematics Education, Kongju National University, Kongju 32588, Republic of Korea}
\email{ywkoh@kongju.ac.kr}

\subjclass[2010] {Primary 42B25, 35Q41, 35S10}
\keywords{pointwise convergence, Schr\"odinger operator.}

\begin{abstract}
We study pointwise convergence   of the fractional Schr\"odinger means along sequences $t_n$ which converge to zero. 
Our main result is that bounds on the maximal function $\sup_{n} |e^{it_n(-\Delta)^{\alpha/2}} f| $ can be deduced from 
those on $\sup_{0<t\le 1} |e^{it(-\Delta)^{\alpha/2}} f|$ when $\{t_n\}$ is contained in the Lorentz space $\ell^{r,\infty}.$
Consequently, our results provide seemingly optimal results in higher dimensions, which extend the recent work of Dimou--Seeger, and Li--Wang--Yan to higher dimensions. 
Our approach based on a localization argument also works for other dispersive equations and provides alternative proofs of previous results on sequential convergence.

\end{abstract}
\maketitle

\section{Introduction}
Let $\alpha>0$. We consider the fractional Schr\"odinger operator
\begin{equation}\label{sol_frac}
e^{it(-\Delta)^{\alpha/2}}f(x) = (2\pi)^{-d} \int_{\mathbb{R}^d} e^{i(x\cdot \xi+t|\xi|^\alpha)} \widehat f(\xi) d\xi. 
\end{equation}
A classical problem posed by Carleson \cite{Carl} is to determine the optimal  regularity $s$ for which
\begin{align}\label{conv}
\lim_{t \rightarrow 0} e^{it(-\Delta)^{\alpha/2}}f = f \quad \text{a.e. }  \quad \forall f \in H^s,
\end{align}
where $H^s$ denotes the inhomogeneous Sobolev spaces of order $s$  with its norm $\|f\|_{H^s(\mathbb R^d)} = \| (1+|\cdot|^2)^{s/2} \widehat{f} \,\|_{L^2(\mathbb R^d)}$. 
The case $\alpha=2$ has been extensively studied until recently. When $d=1$, it was shown by the work of Carleson \cite{Carl} and Kenig--Dahlberg \cite{DK} 
 that \eqref{conv} holds true if and only if $s \ge 1/4$. In higher dimensions,  the problem turned out to be more difficult. 
 Progress was made by contributions of numerous authors.  Sj\"olin \cite{S} and Vega \cite{ V} independently obtained  \eqref{conv} for $s>1/2$. In particular,  further improvement 
 on required regularity   was made by 
 Moyua--Vargas--Vega \cite{MVV}, Tao--Vargas \cite{TV} when $d=2$, and convergence for $s>{(2d-1)}/{4d}$ was shown by Lee \cite{L} for $d=2$ and Bourgain \cite{B} in higher dimensions. 
Bourgain \cite{B2} showed that \eqref{conv} holds only if $s\ge {d}/{2(d+1)}$.  The lower bound was shown to be sufficient for \eqref{conv} by Du--Guth--Li \cite{DGL} for $d=2$, and Du--Zhang \cite{DZ} for $d\ge3$ except for the endpoint case $s= {d}/{2(d+1)}$
(also, see \cite{LR} for earlier results and references therein).

In general, \eqref{conv} continues to be true for $\alpha>1$ if $s> {d}/{2(d+1)}$ (see e.g., \cite{L, CK}, also see \cite{MYZ}).  
If $\alpha=1$,  it is easy to show \eqref{conv} holds if and only if $s>1/2$ in all dimensions (\cite{Co,W2}).
When $0<\alpha<1$,  it is known that \eqref{conv} holds if  $s>\alpha/4$ and the convergence fails if $s<\alpha/4$  in $\mathbb R^1$.  However,  only partial results are known in higher dimensions, i.e., \eqref{conv} holds true for $s>\alpha/2$ and fails for $s<\alpha/4$ when $d\ge2$ (see \cite{W1, W2}).

\subsection*{Convergence along sequences}  
Recently, pointwise convergence along sequences was considered by several authors \cite{DS, S2, SS, SS2, SS3}. More precisely, studied was 
the problem to determine the regularity exponent $s$ such that, for a given sequence $\{t_n\}$ satisfying $\lim_{n\rightarrow \infty} t_n=0$,
\begin{equation}\label{modi_question}
\lim_{n \rightarrow \infty} e^{it_n(-\Delta)^{\alpha/2}}f(x) = f(x) \quad \text{a.e. } x, \quad \forall f\in H^s.
\end{equation}

Naturally, one may expect  that the more rapidly the sequence $\{t_n\}$ converges to zero, the less regularity is required to guarantee almost everywhere convergence.
To quantify how fast the sequence converges to zero,  the sequences in $\ell^r(\mathbb N)$  and  $\ell^{r,\infty}(\mathbb N)$ were considered, where 
$\ell^{r,\infty}(\mathbb N)$  denotes the Lorentz space
\[
\ell^{r,\infty}(\mathbb N):=\big\{ t_n \, : \, \sup_{\delta>0} \delta^r \#\{ n \in \mathbb N: |t_n| \geq \delta \} <\infty \big\}
\]
for $r<\infty$. Note that $\{ n^{-b}\} \in \ell^{r,\infty}(\mathbb{N})$ if and only if $b \ge 1/r$. In particular, Dimou and Seeger \cite{DS} studied the almost everywhere convergence problem in $\mathbb R^1$ using $\ell^{r,\infty}(\mathbb N)$. 
They proved that \eqref{modi_question} holds for all $f \in H^s$
if and only if  $s\ge \min\{ \frac{r\alpha}{4r+2}, \frac{1}{4} \}$ (when $\alpha>1$), $s \ge \frac{r\alpha}{4r+2}$ (when $0<\alpha<1$), and $s\geq \frac{r}{2(r+1)}$ (when $\alpha=1$) for a strictly decreasing convex sequence $\{t_n\} \in \ell^{r,\infty}(\mathbb N)$.
There are also results  in higher dimensions by Sj\"olin \cite{S2}, Sj\"olin and Str\"omberg \cite{SS, SS2, SS3}.
Recently, Li, Wang, and Yan \cite{LWY}, relying on the bilinear approach in \cite{L}, obtained some partial results  for the case $d=\alpha=2$  without assuming  that $\{t_n\}$ decreases.

\subsection*{Maximal estimates} In the study of pointwise convergence  the associated maximal functions play important roles. By a standard argument  \eqref{conv} follows 
if we have 
\begin{align}\label{maxx}
\big\| \sup_{0<t \le 1} |\fs f| \big\|_{L^2(B(0,1))}
\leq C \|f\|_{H^s},
\end{align}
where $B(x,r)=\{ y \in \mathbb R^d: |x-y| <r\}$. 
Likewise,  \eqref{modi_question} follows  from the estimate
\begin{align}\label{tn maxx}
\big\| \sup_{t_n } \big| e^{it_n(-\Delta)^{\alpha/2}}f \big| \big\|_{L^2(B(0,1))}
\leq C  \|f\|_{H^s},
\end{align}
which is, in fact, essentially equivalent to \eqref{modi_question} by Stein's maximal principle. 
Our first result shows that  the maximal estimate  \eqref{tn maxx} can be deduced from \eqref{maxx} when $\{t_n\} \in \ell^{r,\infty}(\mathbb N)$.

\begin{thm}\label{thmpt}
Let $d\geq1$, $\alpha>0$,   $s_*>0$ and $0<r<\infty$.
Suppose \eqref{maxx} holds  for $s\ge s_*$. Then, if $\{t_n\} \in \ell^{r,\infty}(\mathbb N)$,  \eqref{tn maxx} holds  provided
\begin{align}\label{rel}
s\ge\min \Big\{ \frac{r\alpha}{r\min \{ \alpha,1\}+2s_*}s_*, \  s_* \Big\}.
\end{align}
\end{thm}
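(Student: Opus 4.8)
The plan is to decompose the sequence $\{t_n\}$ according to dyadic size and exploit rescaling. Write $\{t_n\}\subset \ell^{r,\infty}(\mathbb N)$, so that for each dyadic $\lambda\le 1$ the set $E_\lambda=\{n: t_n\sim\lambda\}$ has cardinality $\lesssim \lambda^{-r}$. For a single $t_n$ one has, by the change of variables $\xi\mapsto t_n^{-1/\alpha}\xi$ (equivalently scaling in $x$), that $e^{it_n(-\Delta)^{\alpha/2}}f(x)=e^{i(-\Delta)^{\alpha/2}}g(t_n^{-1/\alpha}x)$ with $\widehat g(\xi)=\widehat f(t_n^{-1/\alpha}\xi)$; but it is more convenient to keep $t$ continuous. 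The key point is a local smoothing-type gain: on the ball $B(0,1)$, the continuous maximal estimate \eqref{maxx} applied after rescaling yields, for $f$ with $\widehat f$ supported where $|\xi|\sim N$,
\begin{equation}\label{pp-rescale}
\Big\| \sup_{\lambda/2<t\le \lambda} |e^{it(-\Delta)^{\alpha/2}}f|\Big\|_{L^2(B(0,1))}\lesssim (\lambda N^\alpha)^{\,?}\, N^{s_*}\|f\|_{L^2},
\end{equation}
where the exponent on $\lambda N^\alpha$ is $0$ once $\lambda N^\alpha\gtrsim 1$ (the estimate is scale invariant at unit time-scale after parabolic rescaling by $\lambda^{1/\alpha}$), while for $\lambda N^\alpha\lesssim 1$ the phase $t|\xi|^\alpha$ is essentially constant so the operator is close to the identity and one gains. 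This dichotomy at $\lambda\sim N^{-\alpha}$ is what produces the exponent in \eqref{rel}.

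The main steps, in order: (i) Littlewood–Paley decompose $f=\sum_N f_N$ with $\widehat{f_N}$ supported in $|\xi|\sim N$, $N$ dyadic, and reduce \eqref{tn maxx} to a square-function / single-frequency estimate, summing in $N$ at the end using that $s$ is strictly above the critical exponent (or handling the endpoint by a Lorentz-space refinement). (ii) For fixed $N$, split $\sup_{t_n}$ over the dyadic blocks $\lambda$: $\sup_{t_n}|e^{it_n(-\Delta)^{\alpha/2}}f_N|\le \sum_{\lambda}\sup_{n\in E_\lambda}|e^{it_n(-\Delta)^{\alpha/2}}f_N|$. (iii) For the "large" blocks $\lambda\gtrsim N^{-\alpha}$: bound $\sup_{n\in E_\lambda}$ by the continuous sup over $t\in(\lambda/2,\lambda]$ and invoke the rescaled \eqref{maxx}, i.e. \eqref{pp-rescale} with exponent $0$, giving $\lesssim N^{s_*}\|f_N\|_2$ per block; there are only $O(\log N)$ such blocks, or one sums them against the $\ell^{r,\infty}$ bound — here one does \emph{not} use the cardinality of $E_\lambda$ beyond finiteness, so one must be slightly careful and instead sum in $\lambda$ using the decay coming from the next step's threshold. (iv) For the "small" blocks $\lambda\lesssim N^{-\alpha}$: use $\#E_\lambda\lesssim \lambda^{-r}$ together with a trivial pointwise bound $|e^{it_n(-\Delta)^{\alpha/2}}f_N|\le \sum_{n\in E_\lambda}|e^{it_n(-\Delta)^{\alpha/2}}f_N|$ and the near-identity estimate $\|(e^{it(-\Delta)^{\alpha/2}}-\mathrm{Id})f_N\|_2\lesssim \min(1,\lambda N^\alpha)\|f_N\|_2$ (for $\alpha\le 1$ replace $N^\alpha$ by $N$ in the relevant regime via $|t|\,|\xi|^\alpha$ versus $|\xi|$ — this is the source of the $\min\{\alpha,1\}$), so the contribution of block $\lambda$ is $\lesssim \lambda^{-r}\cdot\lambda N^{\min\{\alpha,1\}}$-type, which is summable over $\lambda\lesssim N^{-\alpha}$ and contributes $\lesssim N^{\alpha r - r\min\{\alpha,1\}\cdot 0\ldots}$; balancing with step (iii) at $\lambda\sim N^{-\alpha}$ gives the growth rate $N^{r\alpha s_*/(r\min\{\alpha,1\}+2s_*)}$, which is $\le N^s$ exactly under \eqref{rel}. (v) Take $s$-derivatives (i.e. multiply $f_N$ by $N^{-s}$) and sum the geometric series in $N$.

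The hard part will be step (iii)–(iv) bookkeeping: making the rescaling of \eqref{maxx} rigorous on the \emph{fixed} ball $B(0,1)$ rather than a dilated ball (rescaling $x\mapsto \lambda^{-1/\alpha}x$ enlarges the ball, so one needs either a version of \eqref{maxx} on all of $\mathbb R^d$ with a locally-square-integrable norm, obtainable from \eqref{maxx} by translation and a standard $\ell^2$-summation of tails using frequency localization, or one works on $B(0,\lambda^{1/\alpha})$ and re-sums), and then correctly combining the two regimes so that the powers of $N$ and $\lambda$ balance to give precisely the exponent in \eqref{rel}. A secondary technical point is the endpoint/sharpness of the $\ell^{r,\infty}$ counting when summing over dyadic $\lambda$: one should use that $\sum_{\lambda\le 1}\min(\lambda^{-r},1)\cdot(\lambda N^\alpha)^{\theta}$ is controlled by its value at the crossover scale, which requires $\theta$ to have the right sign in each regime — automatic here since on large blocks the $N$-power is flat in $\lambda$ and on small blocks the $\lambda$-power beats $\lambda^{-r}$ only after we have extracted enough smoothing, i.e. only at scales $\lambda\lesssim N^{-\alpha}$, which is exactly where those blocks live. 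Once the two regimes are matched, \eqref{rel} drops out and the summation in $N$ is routine provided $s$ is at least the stated minimum (the case $s\ge s_*$ being trivial from \eqref{maxx} directly, since then one just discards the sequence structure).
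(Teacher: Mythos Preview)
Your overall strategy---split the sequence by size of $t_n$, use the continuous maximal estimate on one piece and the $\ell^{r,\infty}$ counting on the other, then balance---is indeed the right shape, and it is what the paper does. But two linked errors make the proposal fail as written.

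\medskip
\textbf{The central gap.} Your display \eqref{pp-rescale} asserts that the exponent on $\lambda N^\alpha$ is $0$ once $\lambda N^\alpha\gtrsim 1$, i.e.\ that rescaling buys nothing in that regime and one is stuck with $N^{s_*}$. This is precisely what is \emph{false}, and the whole theorem rests on its failure. The paper's key step (Lemma~\ref{FF}, via the spatial localization Lemma~\ref{loc}) shows that for $\widehat f$ supported in $\mathbb A_N$ and any $\delta$ with $N^{-\alpha}\le\delta\le\min\{N^{1-\alpha},1\}$,
\[
\Big\|\sup_{0<t\le\delta}\big|e^{it(-\Delta)^{\alpha/2}}f\big|\Big\|_{L^2(B(0,1))}
\lesssim (N^\alpha\delta)^{\max\{s_*,\,s_*/\alpha\}}\|f\|_{L^2},
\]
which is genuinely smaller than $N^{s_*}$ throughout the intermediate range. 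Proving this is not free: rescaling sends $B(0,1)$ to a ball of radius $\gg 1$, so one must first upgrade the local-in-space estimate \eqref{maxx} to a global-in-space one at no loss, using kernel decay (this is the ``hard part'' you correctly flag but do not resolve). This gain in the regime $\lambda N^\alpha\gtrsim 1$ is the engine of the proof; without it you cannot reach the exponent in \eqref{rel}.

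\medskip
\textbf{The roles are swapped.} Because you believe there is no gain for large $\lambda$, you put the continuous $\sup$ on the large-$\lambda$ side (step~(iii)) and the counting on the small-$\lambda$ side (step~(iv)). This is backwards. On the small side, $\#E_\lambda\lesssim\lambda^{-r}$ blows up as $\lambda\to 0$; your $\ell^2\hookrightarrow\ell^\infty$ plus near-identity bound gives at best $\lambda^{1-r/2}N^\alpha$ per dyadic block, so the sum over $\lambda\lesssim N^{-\alpha}$ either diverges (if $r\ge 2$) or is dominated by the top block and yields $N^{\alpha r/2}$, which strictly exceeds $\tilde s_*=\frac{r\alpha s_*}{r\min\{\alpha,1\}+2s_*}$ for every $r>0$. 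On the large side, your step~(iii) gives $N^{s_*}$ per block with $O(\log N)$ blocks, again no better than the trivial bound. The correct split (as in Proposition~\ref{sch}) is: choose a single threshold $\delta$, bound $\sup_{t_n<\delta}$ by the \emph{temporally localized} continuous estimate above to get $(N^\alpha\delta)^{\max\{s_*,s_*/\alpha\}}$, bound $\sup_{t_n\ge\delta}$ by counting ($\#\{n:t_n\ge\delta\}\lesssim\delta^{-r}$) plus Plancherel to get $\delta^{-r/2}$, and equate the two to find $\delta=N^{-2\alpha s_*/(r\min\{\alpha,1\}+2s_*)}$, which lands in the admissible range precisely when $\tilde s_*\le s_*$.

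\medskip
Finally, the $\min\{\alpha,1\}$ in \eqref{rel} does not come from a near-identity estimate as you suggest; it arises from the exponent $\max\{s_*,s_*/\alpha\}$ in the temporally localized bound, equivalently from the constraint $\delta\le\min\{N^{1-\alpha},1\}$ imposed by the spatial localization step.
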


Thanks to Theorem \ref{thmpt} we can improve the previous results and obtain seemingly optimal results for the convergence of sequential Schr\"odinger means in higher dimensions.
For $\{t_n\} \in \ell^{r,\infty}(\mathbb N)$ and $d=1$,  the known estimates \eqref{maxx} (\cite{Carl, W1})  and Theorem \ref{thmpt} give
\eqref{tn maxx} for $s\ge \min ( r\alpha/(4r+2), 1/4)$ when $\alpha>1$, and   for $s>r\alpha /(4r+2)$ when $0<\alpha<1$. 
This recovers the  result (sufficiency part except the endpoint case when $0<\alpha<1$)  in \cite{DS}  without the assumption that $\{t_n\}$ decreases.

In higher dimensions $d\ge 2$, by combining Theorem \ref{thmpt} and recent progress on the maximal bounds, i.e.,  \eqref{maxx} for $\alpha>1$  and $s>\frac{d}{2(d+1)}$ 
\cite{DGL, DZ, CK},  
we have  the estimate \eqref{tn maxx} for 
\begin{equation}
\label{scon}
s> \min \big\{\frac{\alpha dr}{2(d+1)r+2d},\frac{d}{2(d+1)} \big\}
\end{equation}
whenever $\{t_n\} \in \ell^{r,\infty}(\mathbb N)$.  As a consequence,  we have the following  result on pointwise convergence. 

\begin{coro}\label{thmS}
Let $d\geq 2$, $\alpha>1$ and $0<r<\infty$.
For any sequence $\{t_n\}\in \ell^{r,\infty}(\mathbb{N})$,
\eqref{modi_question} holds for all $f\in H^s(\mathbb{R}^d)$ if \eqref{scon} holds. 

\end{coro}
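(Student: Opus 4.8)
The plan is to deduce the corollary from Theorem \ref{thmpt}, the known maximal bounds for $\alpha>1$, and the standard passage from a maximal estimate to pointwise convergence. First I would recall that \eqref{modi_question} holds for all $f\in H^s$ as soon as the maximal estimate \eqref{tn maxx} is available: \eqref{modi_question} is trivially true for Schwartz $g$, since $e^{it_n(-\Delta)^{\alpha/2}}g\to g$ uniformly as $t_n\to0$, and \eqref{tn maxx}, applied on each ball $B(y,1)$ by translation, upgrades this to all of $H^s$ by the usual density argument; the converse implication is Stein's maximal principle, as noted in the text. Thus it suffices to establish \eqref{tn maxx} throughout the range \eqref{scon}.

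To do so I would invoke the fact that, for $d\geq2$ and $\alpha>1$, the estimate \eqref{maxx} holds whenever $s>\tfrac{d}{2(d+1)}$; this follows from Du--Guth--Li \cite{DGL} for $d=2$ and Du--Zhang \cite{DZ} for $d\geq3$ in the case $\alpha=2$, combined with \cite{CK} to cover all $\alpha>1$. Consequently, for every $s_*>\tfrac{d}{2(d+1)}$ the hypothesis of Theorem \ref{thmpt} holds with that $s_*$. Since $\alpha>1$ gives $\min\{\alpha,1\}=1$, condition \eqref{rel} becomes $s\ge\min\big\{\tfrac{r\alpha s_*}{r+2s_*},\,s_*\big\}$. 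The map $s_*\mapsto \tfrac{r\alpha s_*}{r+2s_*}$ is strictly increasing, with derivative $r^2\alpha/(r+2s_*)^2>0$, so the right side of \eqref{rel} is increasing in $s_*$, and as $s_*\downarrow\tfrac{d}{2(d+1)}$ it decreases to
\[
\min\Big\{\frac{r\alpha\cdot\frac{d}{2(d+1)}}{\,r+\frac{d}{d+1}\,},\ \frac{d}{2(d+1)}\Big\}=\min\Big\{\frac{\alpha d r}{2(d+1)r+2d},\ \frac{d}{2(d+1)}\Big\}.
\]
Hence, given $s$ with \eqref{scon}, I would choose $s_*>\tfrac{d}{2(d+1)}$ close enough to $\tfrac{d}{2(d+1)}$ that $s\ge\min\big\{\tfrac{r\alpha s_*}{r+2s_*},\,s_*\big\}$; then Theorem \ref{thmpt} yields \eqref{tn maxx} for any $\{t_n\}\in\ell^{r,\infty}(\mathbb N)$, and therefore \eqref{modi_question}.

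I do not expect a genuine obstacle: the substance is entirely in Theorem \ref{thmpt} and, underneath \eqref{maxx}, in the restriction/decoupling machinery of \cite{DGL, DZ}, while what remains above is elementary exponent arithmetic and a routine reduction. The single point deserving care is that the known form of \eqref{maxx} is an open condition ($s>\tfrac{d}{2(d+1)}$) whereas Theorem \ref{thmpt} is stated with a closed one ($s\ge s_*$); this is exactly why the threshold in \eqref{scon} comes with a strict inequality and is obtained by the limiting choice of $s_*$ above rather than at a single endpoint.
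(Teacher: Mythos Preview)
Your proposal is correct and follows exactly the route the paper takes: the corollary is stated as an immediate consequence of Theorem \ref{thmpt} combined with the known maximal bounds \eqref{maxx} for $s>\tfrac{d}{2(d+1)}$ from \cite{DGL, DZ, CK}, together with the standard passage from \eqref{tn maxx} to \eqref{modi_question}. Your treatment of the open-versus-closed exponent issue, handled by letting $s_*\downarrow \tfrac{d}{2(d+1)}$, is the appropriate way to make explicit what the paper leaves implicit.
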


This improves the previous results  in  \cite{LWY, SS2}. 
We expect that  the regularity exponents given in \eqref{scon} is sharp up to the endpoint case. However,  we are not able to verify  it for the moment.

\begin{rk}
As mentioned before, when $0<\alpha<1$ and $d\ge 2$, it is known that \eqref{conv} holds if $s>\alpha/2$ (\cite{W2}). Thus,  
Theorem \ref{thmpt} yields \eqref{modi_question} for $s> \frac{r\alpha}{2(r+1)}$.
The implication in Theorem \ref{thmpt} also works for more general operators (see Remark \ref{g-symbol}).  In particular, 
one can also recover the result of Li, Wang and Yan \cite{LWY2} for the nonelliptic Schr\"odinger operator by combining Theorem \ref{thmpt} with the results in \cite{RVV}.
\end{rk}
\begin{rk}
For the wave operator, i.e., $\alpha=1$, \eqref{modi_question} holds true if and only if $s \ge \frac{r}{2(r+1)}$ for $\{t_n\} \in \ell^{r,\infty}(\mathbb N)$. When $d=1$, this was shown in \cite{DS}. In higher dimensions, one can show it using Theorem \ref{thmpt} (also Corollary \ref{imp-Hs}). The sharpness  can be obtained by following the argument in \cite{DS}.
We remark that 
\eqref{modi_question} is closely related to $L^p$ boundedness of the spherical maximal operator given  by taking the supremum over more general sets (see \cite{SWW, AHRS, RS}).
\end{rk}

\subsection*{Localization argument}\label{section1.2}
The proof of Theorem \ref{thmpt} relies on a localization argument.  We briefly explain our approach. 
Via Littlewood-Paley decomposition,    the proof of \eqref{modi_question} can be  reduced to showing 
\begin{align}\label{tn max}
\big\| \sup_{t_n } \big| e^{it_n(-\Delta)^{\alpha/2}}f \big| \big\|_{L^2(B(0,1))}
\leq C R^{s} \|f\|_{L^2(\mathbb{R}^d)}
\end{align}
where $\widehat f$ is supported in $\mathbb A_R:=\{ \xi: R\le |\xi|\le 2R\}$ (see Section  \ref{section4}). 
In the previous work \cite{DS, S2, SS, SS2}  the estimate \eqref{tn max} was obtained by relying  on the kernel estimates.
In contrast, we deduce \eqref{tn max} directly from \eqref{maxx}. 
Clearly, \eqref{maxx}  gives  
\begin{align}\label{max22'}
\big\| \sup_{0<t \le 1} |\fs f| \big\|_{L^2(B(0,1))}
\leq C R^{s} \|f\|_{L^2(\mathbb{R}^d)}
\end{align}
for $R\ge1$ whenever $\widehat f$ is supported in $\mathbb A_R$. Using the estimate and a localization argument, 
 we first obtain from \eqref{max22'} a  temporally localized maximal estimate  
\begin{align}\label{main2_cor}
\big\| \sup_{t \in I} |\fs f| \big\|_{L^2(B(0,1))}
\leq C' (1+R^\alpha|I|)^{\max\{s, \frac s \alpha \}} \|f\|_{L^2(\mathbb{R}^d)}
\end{align}
for $R\ge1$ and any subinterval $I \subset [0,1]$ with $|I| \leq R^{1-\alpha}$  when $\widehat f$ is supported in $\mathbb A_R$.
Moreover, the converse implication from \eqref{main2_cor} to \eqref{max22'} is also true as long as $R^{-\alpha}<|I| \le R^{1-\alpha}$ (see Lemma \ref{FF} for detail).
Once we have  \eqref{main2_cor}, we can obtain \eqref{tn maxx} by following the argument in \cite{DS}.  

If the exponent $s$ in the estimate \eqref{max22'} is sharp, then the same is true for the estimate \eqref{main2_cor}. 
For instance, when $\alpha=2$, \eqref{max22'} holds for $s>{d}/ 2(d+1)$, which is optimal up to the endpoint case, and hence so does \eqref{main2_cor} for the same $s$.
When $\alpha>1$ and $|I| \geq R^{1-\alpha}$, one can see 
the exponent s in \eqref{main2_cor} can not be smaller than that in \eqref{max22'} using  the  localization lemma in \cite{L} (cf. \cite{CLV, LeeRogers, Rogers}).

To show the implication from \eqref{max22'} to \eqref{main2_cor}, we adapt the idea of temporal localization lemma in \cite{L, CLV}.
We establish a spatial localization lemma (Lemma \ref{loc}), which plays a crucial role in proving Theorem \ref{thmpt}.
More precisely, we show that the local-in-spatial estimate \eqref{max22'} can be extended to the global-in-spatial estimate with the same regularity exponent.
After a suitable scaling, we  obtain the temporal localized estimate \eqref{main2_cor} from the global-in-spatial estimate.

\subsection*{Extension to fractal measure}  Maximal estimates relative to general measures (instead of the Lebesgue measure) have been used to get more precise description on the pointwise behavior of the Schr\"odinger mean $e^{it(-\Delta)^{\alpha/2}}f$. 
For a given sequence $\{t_n\}$ converging to zero, we consider 
\[
D^{\alpha,d}(f,\{t_n\}) =\big\{ x\in \mathbb R^d:  e^{it_n(-\Delta)^{\alpha/2}}f (x) \not \to f(x)
\quad~ \text{as} \quad~ t_n \rightarrow 0 \big\}
\]
and set
\[
\mathfrak D^{\alpha,d}(s,r)=\sup_{f \in H^s,~ \{t_n\} \in \ell^{r,\infty}} \dim_H D^{\alpha,d}(f,\{t_n\}),
\]
where $\dim_H$ denotes the Hausdorff dimension.
One can compare $\mathfrak D^{\alpha,d}(s,r)$ with the dimension of the divergence set
\[\mathfrak D^{\alpha,d}(s)=\sup_{f \in H^s} \dim_H \big\{ x\in \mathbb R^d:  e^{it_n(-\Delta)^{\alpha/2}}f (x) \not \to f(x)
\quad~ \text{as} \quad~ t \rightarrow 0 \big\}.\]
The bounds on  $\mathfrak D^{\alpha,d}(s)$ can be obtained by the maximal estimate relative to general measures (see, for example,  \cite{BBCR, DZ, HKL}), to which the fractal Strichartz estimates studied in \cite{ CHL, Harris, DGOWWZ} are closely related (also see \cite{Mattila, Wolff, Er}).  

The  implication in  Theorem \ref{thmpt} also extends to the maximal estimates relative to general fractal  measures, so we can  make use of the known estimates for the $L^2$-fractal maximal estimates and the fractal Strichartz estimates to obtain upper bounds on $\mathfrak D^{\alpha,d}(s,r)$, $0<r<\infty$.
We discuss it in detail in Section \ref{sec:div}.

\subsubsection*{Organization of the paper}
In Section \ref{section2}, we deduce from \eqref{maxx} temporally localized maximal estimates in Lemma \ref{FF} (relative to general measure) which are to be used to prove Theorem \ref{thmpt}.
We prove Theorem \ref{thmpt}  and discuss upper bounds on the dimension of divergence sets in Section \ref{section4}.

\subsubsection*{Notations}
Throughout this paper,
a generic constant $C>0$ depends only on dimension $d$, which may change
from line to line.
If a constant depends on some other values (e.g. $\epsilon$), we denote it by $C_\epsilon$.
The notation $A \lesssim B$ denotes $A \leq CB$ for a constant $C>0$, and we denote by $A \sim B$ if $A \lesssim B$ and $B \lesssim A$.
We often denote $L^2(\mathbb{R}^d)$ by $L^2$, and similarly ${H}^s(\mathbb{R}^d)$ by ${H}^s$.

\section{Temporally  localized maximal estimates}\label{section2}

In this section, we prove that the estimates \eqref{max22'} and \eqref{main2_cor} are equivalent.  For later use, we consider 
the equivalence in a more general setting, that is to say,  in the form of estimates  relative to fractal  measures (Lemma \ref{FF}). To do this, we recall the following.

\begin{Def}
Let $0<\gamma \le d$ and  let $\mu$ be a nonnegative Borel measure. We  say $\mu$ is $\gamma$-dimensional if 
there is a constant $C_\mu$ such that
\begin{align}\label{mu}
\mu (B(x,r)) \le C_\mu r^\gamma, \quad \forall (x,r) \in \mathbb R^d \times \mathbb R_+.
\end{align}
By $\langle\mu \rangle_\gamma$ we denote the infimum of such a constant $C_\mu$. 
\end{Def}

\newcommand{\nm}[1]{\langle #1\rangle_\gamma^\frac12}

We first deduce a temporally localized maximal estimate  from the  estimate  
\begin{align}\label{max22}
\big\| \sup_{0<t \le 1} |\fs f| \big\|_{L^{2}(B(0,1),d\mu)} \le C
R^{\frac{d-\gamma}2+s} \nm\mu \|f\|_{L^2}
\end{align}
which holds whenever $\widehat f $ is supported on $\mathbb A_R$. 

\begin{lem}\label{FF}
Let $p\ge2$, $R\ge1$, $\alpha>0$, $0<\gamma \le d$.  Let $I \subset [0,1]$ be an interval such that $ |I| \le \min \{ R^{1-\alpha},1\}$. 
Suppose that \eqref{max22} holds for some $s$ whenever $\widehat f$ is supported in $\mathbb A_{R}$ and $\mu$ is a $\gamma$-dimensional measure in $\mathbb R^d$. 
Then, for any $\gamma$-dimensional measure $\widetilde \mu$ in $\mathbb R^d$,
 there is a constant $C'>0$ such that
\begin{align}\label{main2}
\big\| \sup_{t \in I} |\fs f| \big\|_{L^{2}(B(0,1),d\widetilde\mu)}
\le
C' \nm{\widetilde\mu}R^{\frac{d-\gamma}2} (1+R^\alpha|I|)^{\max\{s, \frac s \alpha \}} \|f\|_{L^2}
\end{align}
holds whenever $\widehat f$ is supported on $\mathbb A_R$.
Conversely, if \eqref{main2} holds for $\widehat f $ supported on $\mathbb A_R$, $\widetilde\mu$ is  $\gamma$-dimensional, and interval $I\subset [0,1]$ satisfies  $R^{-\alpha} < |I| \le \min \{ R^{1-\alpha},1\}$,  then there exists $C>0$ such that \eqref{max22} holds whenever $\widehat f$ is supported on $\mathbb A_R$ and $\mu$ is $\gamma$-dimensional.
\end{lem}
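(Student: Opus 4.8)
\medskip
\noindent\textbf{Proof strategy.} The plan is to derive both implications from an anisotropic rescaling that normalizes the length of the temporal interval, combined with a spatial localization that upgrades a $B(0,1)$-local maximal estimate to a global-in-$x$ one without changing the exponent. Two facts will be used throughout. First, the group law $e^{i(a+\tau)(-\Delta)^{\alpha/2}}=e^{ia(-\Delta)^{\alpha/2}}e^{i\tau(-\Delta)^{\alpha/2}}$: since $e^{ia(-\Delta)^{\alpha/2}}$ is unitary on $L^2$ and preserves $\mathbb A_R$, it lets us assume $I=[0,\lambda]$ with $\lambda=|I|$ in both parts. Second, the scaling identity $e^{it(-\Delta)^{\alpha/2}}\big(g(\rho\,\cdot)\big)(x)=\big(e^{it\rho^\alpha(-\Delta)^{\alpha/2}}g\big)(\rho x)$, $\rho>0$, which sends frequency support $\mathbb A_\sigma$ to $\mathbb A_{\rho\sigma}$, the interval $[0,\lambda]$ to $[0,\lambda\rho^{-\alpha}]$, $L^2$ to $L^2$ up to a factor $\rho^{-d/2}$, and a $\gamma$-dimensional measure $\mu$ to $\mu(\rho\,\cdot)$, whose $\gamma$-dimensional constant changes by a factor $\rho^{\pm\gamma}$.

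For the forward implication I would rescale by $\rho=\lambda^{1/\alpha}$: then $[0,\lambda]$ becomes $[0,1]$, $\mathbb A_R$ becomes $\mathbb A_{R'}$ with $R'=\lambda^{1/\alpha}R$, the ball $B(0,1)$ becomes $B(0,\lambda^{-1/\alpha})$, and $\widetilde\mu$ is replaced by a $\gamma$-dimensional measure of constant $\le\rho^\gamma\langle\widetilde\mu\rangle_\gamma$. When $R'\ge1$ (that is, $\lambda\ge R^{-\alpha}$) I would invoke \eqref{max22} at frequency $R'$ in the \emph{global-in-$x$} form, extended from $B(0,1)$ to $\mathbb R^d$ with the same exponent $s$ --- the spatial localization step --- and when $R'<1$ I would instead use the elementary low-frequency bound $\|\sup_{0<t\le1}|e^{it(-\Delta)^{\alpha/2}}g|\|_{L^2(\mathbb R^d,d\mu)}\lesssim\langle\mu\rangle_\gamma^{1/2}\|g\|_{L^2}$, obtained by expanding $e^{it|\xi|^\alpha}$ in a power series and applying the $\gamma$-dimensional $L^2$-restriction estimate $\|h\|_{L^2(B(0,1),d\mu)}\lesssim R^{(d-\gamma)/2}\langle\mu\rangle_\gamma^{1/2}\|h\|_{L^2}$ for $\widehat h$ on $\mathbb A_R$ (proved by writing $h=h*\phi_R$ with $\widehat{\phi_R}$ adapted to $\mathbb A_R$, Cauchy--Schwarz, and summing the $\mu$-masses of dyadic shells). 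Undoing the scaling, the factor $\rho^{-d/2}$ and the powers $\rho^{\pm\gamma}$ cancel the corresponding powers of $R'$, leaving $R^{(d-\gamma)/2}(R^\alpha\lambda)^{s/\alpha}\langle\widetilde\mu\rangle_\gamma^{1/2}\|f\|_{L^2}$, which is dominated by the asserted bound because $R^\alpha\lambda\ge1$ in the nontrivial range and $s/\alpha\le\max\{s,s/\alpha\}$.

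The converse runs the same rescaling backwards: from \eqref{main2} at a frequency $\widetilde R$ with interval $\widetilde I$ in its admissible window, the dilation $\rho=|\widetilde I|^{1/\alpha}$ stretches $\widetilde I$ to $[0,1]$ and lowers the frequency to $\widetilde R|\widetilde I|^{1/\alpha}$; the hypothesis $|I|>R^{-\alpha}$ is precisely what keeps this rescaled frequency above $1$, so that one arrives at a nontrivial instance of \eqref{max22}, while $|I|\le\min\{R^{1-\alpha},1\}$ keeps both the rescaled time interval inside $[0,1]$ and the rescaled ball inside a dilate of $B(0,1)$, so that no spatial localization is needed in this direction. Tracking the same homogeneities turns the temporal factor of \eqref{main2} back into the exponent $s$ of \eqref{max22}, the only delicate point being to match the shape of that factor against the admissible range of $|I|$.

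I expect the spatial localization --- extending \eqref{max22} from $B(0,1)$ to $\mathbb R^d$ with no loss --- to be the main obstacle. For $0<\alpha\le1$ it is short: over $|t|\le1$ and frequency $\sim R$ the kernel of $e^{it(-\Delta)^{\alpha/2}}$ is concentrated in an $O(1)$-ball with rapidly decaying tails, so a unit partition of unity applied to $f$, with the tails discarded, presents the global estimate as a bounded-overlap sum of local ones with no loss. For $\alpha>1$ the propagation speed is $\sim R^{\alpha-1}\gg1$ over unit time, so a unit-ball decomposition costs a power of $R$; one instead localizes at the intermediate scale $R^{\alpha-1}$ and rescales once more there, which brings the frequency back into the range covered by \eqref{max22}, and the delicate point is to check that this nested rescaling transforms the regularity exponent exactly so that no power of $R$ is lost. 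The remaining ingredients --- counting ball overlaps, estimating kernel tails by nonstationary phase, and summing $\gamma$-dimensional masses over shells --- are routine.
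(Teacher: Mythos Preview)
Your overall plan---anisotropic rescaling combined with a spatial localization lemma, together with the trivial measure bound when $|I|\le R^{-\alpha}$---is exactly the paper's strategy; the paper normalizes frequency to $1$ rather than time to $[0,1]$, but that is only a reparametrization, and your spatial localization is its Lemma~\ref{loc} while the $\gamma$-dimensional restriction estimate you sketch is Lemma~\ref{easy}. For $0<\alpha\le1$ the argument goes through essentially as you describe.

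The gap is for $\alpha>1$. Your ``nested rescaling'' for the global-in-$x$ extension of \eqref{max22} does not close: contracting a ball of radius $(R')^{\alpha-1}$ to $B(0,1)$ raises the frequency to $(R')^\alpha$ but also stretches time from $[0,1]$ to $[0,(R')^{\alpha(\alpha-1)}]$, which lies outside the interval to which \eqref{max22} applies, so you cannot re-invoke it there. What actually follows (by first rescaling to frequency $1$ and applying the spatial lemma at the matched time scale, as the paper does) is the global estimate with exponent $\max\{\alpha s,s\}=\alpha s$; undoing your scaling then gives the temporal factor $(R^\alpha|I|)^{\max\{s,s/\alpha\}}$---precisely the asserted bound---rather than the stronger $(R^\alpha|I|)^{s/\alpha}$ you claim. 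This over-claim is harmless for the forward implication but it masks the real issue in the converse. There your rescaling followed by restriction to $B(0,1)$ recovers \eqref{max22} at the new frequency $R'=R|I|^{1/\alpha}$ only with exponent $\alpha s$, not $s$: equivalently, choosing $|I|$ at its upper endpoint $R^{1-\alpha}$ in \eqref{main2} yields only the maximal estimate on $B(0,1)\times[0,R^{1-\alpha}]$ at frequency $R$, and passing from the short time interval $[0,R^{1-\alpha}]$ to $[0,1]$ without loss is exactly the temporal localization lemma of \cite{CLV}. The paper invokes this explicitly as the implication \eqref{dil2}$\Rightarrow$\eqref{dil}, and it is not supplied by any rescaling or purely spatial argument in your scheme; no choice of $(\widetilde R,\widetilde I)$ avoids it, since the homogeneity $(R^\alpha|I|)^{s}$ always rescales to $(R')^{\alpha s}$.
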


\begin{rk}\label{g-symbol}
By a simple modification of our argument, Lemma \ref{FF} can be extended to  a class of evolution operators  $e^{itP(D)}$ as long as 
\[  |\partial_\xi^\beta P(\xi)|\lesssim  |\xi|^{\alpha-|\beta|}, \quad \forall \beta  \] 
and $|\nabla   P(\xi)|\gtrsim |\xi|^{\alpha-1}$ hold (see \cite{CLV}).
Hence, an analogue of Theorem \ref{thmpt} holds true for $e^{itP(D)}$. A typical example of such an operator is the non elliptic Schr\"odinger operator $e^{it(\partial_{x_1}^2-\partial_{x_2}^2\pm \partial_{x_3}^2\pm \dots \pm \partial_{x_d}^2)}$. 
\end{rk}

The rest of the section is devoted to the proof of  Lemma \ref{FF}, for which we first consider  spatial localization.

\subsection{Spatial localization}

By adapting the argument in \cite{L, CLV}, we prove a spatial localization lemma exploiting rapid decay of the kernel.

\begin{lem}\label{loc}
Let $\alpha>0$ and $r\ge1$. Let $\mu$ be a $\gamma$-dimensional measure in $\mathbb R^d$.
For $R \ge1$,  we set $I_R=[0,R]$. 
Suppose that
\begin{equation}\label{small ball}
\big\| \fs f \big\|_{L_x^{2}(B(0,R), d \mu; L_t^r(I_R))}
	\le C  R^s \nm\mu \|f\|_{L^2}
\end{equation}
holds for some $s \in \mathbb R$ whenever $\widehat f$ is supported in $\mathbb A_{1}$.
Then, there exists a constant $C_1>0$ such that
\begin{equation}\label{large ball}
\big\| \fs f \big\|_{L_x^{2}(\mathbb R^d,d \mu; L_t^r(I_R))}
	\le C_1 R^s \nm\mu \|f\|_{L^2}
\end{equation}
holds whenever $\widehat f$ is supported in $\mathbb A_{1}$. 
\end{lem}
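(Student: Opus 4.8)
The plan is to prove Lemma~\ref{loc} by a standard spatial-localization/finite-overlap argument, exploiting the rapid decay of the kernel of $\fs$ on frequency-localized functions. First I would fix a Schwartz function $\psi$ with $\widehat\psi$ supported in a neighborhood of $\mathbb A_1$ and $\widehat\psi\equiv 1$ on $\mathbb A_1$, so that $\fs f=\fs(\psi * f)$ whenever $\widehat f$ is supported in $\mathbb A_1$; equivalently $\fs f(x)=\int K_t(x-y)f(y)\,dy$ where $K_t(x)=(2\pi)^{-d}\int e^{i(x\cdot\xi+t|\xi|^\alpha)}\widehat\psi(\xi)\,d\xi$. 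The crucial point is that on the frequency annulus $|\xi|\sim 1$ and for $0\le t\le R$, nonstationary phase in $\xi$ gives, for any $N$, the bound $|K_t(x)|\lesssim_N R^{?}(1+|x|)^{-N}$; more precisely, since $|\nabla_\xi(x\cdot\xi+t|\xi|^\alpha)|\gtrsim |x|$ once $|x|\ge C R$ (because $|t\nabla_\xi|\xi|^\alpha|\lesssim R$ on the support), repeated integration by parts yields $|K_t(x)|\lesssim_N (1+|x|)^{-N}$ uniformly in $t\in I_R$ when $|x|\gtrsim R$, and trivially $|K_t(x)|\lesssim 1$ always. This is the one genuinely analytic ingredient; everything else is bookkeeping.

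Next I would tile $\mathbb R^d$ by a lattice of cubes $\{Q\}$ of sidelength $\sim R$ centered at points of $R\mathbb Z^d$, and correspondingly decompose $f=\sum_Q f_Q$ with $f_Q=f\chi_{Q}$. For a fixed cube $Q$ centered at $c_Q$, I want to estimate $\|\fs f_Q\|_{L^2_x(\mathbb R^d,d\mu;L^r_t(I_R))}$. Splitting the $x$-integral into the dilate $2Q$ and its complement: on $2Q$ we have $x$ in a ball of radius $\sim R$, so a translation reduces this piece to the hypothesis \eqref{small ball} applied to the translated measure $\mu(\cdot+c_Q)$ (still $\gamma$-dimensional with the same $\langle\cdot\rangle_\gamma$ up to a constant, since the $\gamma$-dimensional condition is translation invariant), giving $\lesssim R^s\nm\mu\|f_Q\|_{L^2}$. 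On the complement $\mathbb R^d\setminus 2Q$, for $x$ in a cube $Q'$ at lattice distance $k\ge 1$ from $Q$ we have $|x-y|\gtrsim kR$ for $y\in Q$, so the kernel bound gives $|\fs f_Q(x)|\lesssim_N (kR)^{-N}\int_Q|f|\lesssim_N (kR)^{-N}R^{d/2}\|f_Q\|_{L^2}$ pointwise in $t$; integrating against $d\mu$ over $Q'$ costs $\mu(Q')\lesssim \nm\mu^2 R^\gamma$, and summing over all $Q'$ (i.e.\ over $k\ge1$ with $\lesssim k^{d-1}$ cubes at distance $k$) converges for $N$ large and produces a bound $\lesssim_N R^{-100}\nm\mu\|f_Q\|_{L^2}$, which is harmless. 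Hence $\|\fs f_Q\|_{L^2_x(\mathbb R^d,d\mu;L^r_t(I_R))}\lesssim R^s\nm\mu\|f_Q\|_{L^2}$ for each $Q$.

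Finally I would reassemble. The naive triangle inequality $\sum_Q R^s\nm\mu\|f_Q\|_{L^2}$ is too lossy, so instead I would use almost-orthogonality: the operators $f_Q\mapsto \fs f_Q$ have outputs that are $L^2(d\mu)$-almost orthogonal across cubes because of the same off-diagonal kernel decay. Concretely, writing $F=\fs f=\sum_Q \fs f_Q$ and expanding $\|F\|_{L^2_x(d\mu;L^r_t)}^2$ (for $r\ge 2$ one can bound $L^r_t(I_R)$ crudely, or work directly with the mixed norm), the diagonal terms sum to $\lesssim R^{2s}\langle\mu\rangle_\gamma\sum_Q\|f_Q\|_2^2=R^{2s}\langle\mu\rangle_\gamma\|f\|_2^2$ by the per-cube estimate and $\sum_Q\|f_Q\|_2^2=\|f\|_2^2$, while the off-diagonal pairings are controlled using the rapid kernel decay (Cauchy--Schwarz plus the $(1+|x-y|)^{-N}$ factor summed over cube pairs) and contribute a geometrically decaying series. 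I expect the main obstacle to be making this last almost-orthogonality step fully rigorous with the $L^r_t$ norm inside and the general $\gamma$-dimensional measure $\mu$ outside --- one has to be careful that the measure of each cube is $\lesssim R^\gamma$ (not $R^d$), that the translation invariance of the $\gamma$-dimensional bound is used correctly, and that the $t$-integration over $I_R=[0,R]$ does not interfere (it doesn't, since the kernel bounds are uniform in $t\in I_R$). A clean alternative that sidesteps the orthogonality bookkeeping is to prove \eqref{large ball} first for the single-cube pieces and then invoke a Córdoba-type $L^2$ square-function inequality $\|\sum_Q g_Q\|_{L^2(d\mu)}^2\lesssim \sum_Q\|g_Q\|_{L^2(\widetilde\mu_Q)}^2$ valid when the $g_Q$ have the finite-overlap/rapid-decay structure above; I would present the argument in whichever of these two forms turns out shorter.
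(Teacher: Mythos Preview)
Your ingredients are exactly the right ones --- frequency projection $P$ so that $\fs f=K_t\ast f$, the nonstationary-phase bound $|K_t(x)|\lesssim_N(1+|x|)^{-N}$ for $|x|\gg R$ uniformly in $t\in I_R$, and a tiling of $\mathbb R^d$ by $R$-cubes --- and these are precisely what the paper uses. The difference is organizational, and it matters. You decompose the \emph{input} as $f=\sum_Q f\chi_Q$, prove a per-cube estimate $\|\fs P f_Q\|_{L^2_x(\mathbb R^d,d\mu;L^r_t)}\lesssim R^s\nm\mu\|f_Q\|_2$, and then face a reassembly problem: the triangle inequality gives only $\ell^1$ in $Q$, so you need almost-orthogonality to get back to $\ell^2$. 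Your suggested fix, ``expanding $\|F\|_{L^2_x(d\mu;L^r_t)}^2$'' into cross terms, is not literally available for $r\neq 2$ since the mixed norm is not an inner-product norm; it can be salvaged by first tiling the output into cubes $Q'$ and using the Schur-type bound $\sum_{Q'}\big(\sum_Q a_{Q,Q'}\|f_Q\|_2\big)^2\lesssim R^{2s}\langle\mu\rangle_\gamma\|f\|_2^2$ with $a_{Q,Q'}\sim R^s(1+\operatorname{dist}(Q,Q')/R)^{-N}$, but this is exactly the bookkeeping you were hoping to avoid.

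The paper sidesteps all of this by decomposing the \emph{output} first: cover $\mathbb R^d$ by finitely overlapping $R$-balls $B$, so that $\|\cdot\|_{L^2_x(\mathbb R^d,d\mu;L^r_t)}^2\lesssim\sum_B\|\cdot\|_{L^2_x(B,d\mu;L^r_t)}^2$ comes for free from the $L^2_x$ structure. Then for each $B$ split $f=\chi_{\widetilde B}f+\chi_{\widetilde B^c}f$ with $\widetilde B$ a fixed dilate; the near part is handled by the hypothesis~\eqref{small ball} (after translation and re-inserting $P$), and the far part by the kernel decay plus a one-line Schur test $\|(1+|\cdot|)^{-(d+\gamma)}\ast|f|\|_{L^2(d\mu)}\lesssim\nm\mu\|f\|_2$. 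The $\ell^2$ reassembly on the input side is then just $\sum_B\|\chi_{\widetilde B}f\|_2^2\lesssim\|f\|_2^2$ by finite overlap --- no orthogonality argument at all. Your approach is not wrong, but swapping the order of the two decompositions turns the one step you flagged as the ``main obstacle'' into a triviality.
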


\begin{proof}
Let $P$ be a projection operator defined by $\mathcal F(Pg)(\xi)=\beta(|\xi|)\widehat g(\xi)$ where $\beta\in C_c(2^{-1},2^2)$ and $\beta=1$ on $[1,2]$.
Let $\{B\}$ be a collection of finitely overlapping balls of radius $R$ which cover $\mathbb R^d$. Denote $\widetilde B=B(a,10\alpha 2^{2|\alpha-1|}R)$ if $B=B(a,R)$. 
Then, we note that $\|F\|_{L^{2}(\mathbb R^d)}^2 \lesssim \sum_B \|F\|_{L^{2}(B)}^2$.

Since  $Pf=f$, by Minkowski's inequality we have
\[\big\| \fs f \big\|_{L_x^{2}(\mathbb R^d, d \mu; L_t^r(I_R))}^2
\lesssim (\mathcal L_1+\mathcal L_2),\]
 where
\begin{align*}
\mathcal L_1&= \sum_B \big\| \fs P(\chi_{\widetilde B}f)\big\|_{L_x^{2}(B,d \mu; L_t^r(I_R))}^2, \\
\mathcal L_2&= \sum_B \big\| \fs P(\chi_{\widetilde {B}^{\mathrm c}}f) \big\|_{L_x^{2}(B,d\mu; L_t^r(I_R))}^2.
\end{align*}

Note that $\fs Pf=K(\cdot,t)\ast f$ where $K$ is given by
\begin{equation*}
K(x, t) = \int e^{i (x \cdot \xi + t |\xi|^\alpha) } \beta(|\xi|)\,d\xi.
\end{equation*}
By integration by parts, it is easy to see $|K (x,t)| \le C_NR^{-N} (1+|x|)^{-N}$ for any $N\ge1$
if $|x|>10\alpha 2^{2|\alpha-1|} R$ and $|t| \le R$.
Thus,
we have
\[ \| \fs (\chi_{\widetilde {B}^{\mathrm c}}f)(x)\|_{L_t^r(I_R)} \le  C_N R^{-N} (1+|\cdot|)^{-N} \ast |f|(x)\] 
for any $N\ge 0$ whenever $x \in B$.  Taking $N$ large enough, we get  
\[ \mathcal L_2\le C  R^{2s} \| (1+|\cdot|)^{-(d+\gamma)} \ast |f| \|_{L_x^{2}(\mathbb R^d,d\mu)}^2 . \]
By  Schur's test it follows that $ \| (1+|\cdot|)^{-(d+\gamma)} \ast |f| \|_{L_x^{2}(B,d\mu)}\le C\langle \mu \rangle^\frac12\|f\|_2$. 
Therefore,  we only need to consider  $\mathcal L_1$. 
 
 Applying  \eqref{small ball} on each $B$, we obtain
\[
\textstyle \mathcal L_1 \lesssim R^{2s} \langle \mu \rangle \sum_B \|\chi_{\widetilde B} f\|_{L^2}^2 \le C  R^{2s} \langle \mu \rangle \|f\|_{L^2}^2.
\]
The last inequality  follows since  the balls $\widetilde B$ overlap finitely. This completes the proof. 
\end{proof}

\subsection{Proof of Lemma \ref{FF}}  To prove Lemma \ref{FF}, we invoke an elementary lemma.

\begin{lem}[{\cite{HKL}}]\label{easy}
Let $\mu$ be a $\gamma$-dimensional measure in $\mathbb R^d$.
If $\widehat F$ is supported on $B(0,R)$, then
$
\| F\|_{L^{2}(d\mu)}\le C 
R^{\frac{d-\gamma}2} \nm\mu \|F\|_{L^{2}(\mathbb R^d)}.
$
\end{lem}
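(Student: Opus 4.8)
\textbf{Proof strategy for Lemma \ref{easy}.} The plan is to reduce the weighted $L^2(d\mu)$ estimate to the unweighted one by exploiting the Fourier support condition together with the $\gamma$-dimensional bound \eqref{mu}. The key observation is that a function $F$ with $\widehat F$ supported on $B(0,R)$ does not oscillate faster than scale $R^{-1}$, so its mass is essentially constant on balls of radius $\sim R^{-1}$; hence one can control $|F|^2$ pointwise by its average over such a ball, and then integrate that average against $\mu$ using \eqref{mu}.

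\textbf{Main steps.} First, fix a Schwartz function $\phi$ with $\widehat\phi \ge 0$, $\widehat\phi$ supported on $B(0,1)$, and $\widehat\phi \ge 1$ on $B(0,1)$; set $\phi_R(x) = R^d \phi(Rx)$, so that $\widehat{\phi_R}(\xi) = \widehat\phi(\xi/R) \ge 1$ on $B(0,R)$. Since $\widehat F$ is supported on $B(0,R)$, we have $F = F \ast \psi_R$ for a suitable reproducing kernel, but more useful is the pointwise domination: one shows $|F(x)|^2 \lesssim (|F|^2 \ast |\psi_R|)(x)$ for an $L^1$-normalized, essentially $R^{-1}$-concentrated kernel $\psi_R$ (this is the standard consequence of band-limitedness, e.g. via $|F(x)| \le \int |F(x-y)| |\psi_R(y)|\,dy$ followed by Cauchy--Schwarz with $\|\psi_R\|_1 \lesssim 1$, where $\psi_R(y) = R^d\psi(Ry)$ and $\psi$ is Schwartz). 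Second, integrate this against $\mu$ and apply Fubini:
\[
\int |F(x)|^2\, d\mu(x) \lesssim \int \Big( \int |F(x-y)|^2 |\psi_R(y)|\, dy \Big) d\mu(x) = \int |F(z)|^2 \Big( \int |\psi_R(z + \cdot)| \, d\mu \Big) dz.
\]
Third, bound the inner integral $\int |\psi_R(z - x)|\, d\mu(x)$ uniformly in $z$: decompose the domain of $x$ into dyadic annuli $|z - x| \sim 2^k R^{-1}$ for $k \ge 0$ (and the ball $|z-x| \le R^{-1}$), on which $|\psi_R(z-x)| \lesssim_N R^d (1 + 2^k)^{-N}$; the $\mu$-measure of the $k$-th region is $\le \langle\mu\rangle_\gamma (2^k R^{-1})^\gamma$ by \eqref{mu}. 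Summing,
\[
\int |\psi_R(z-x)|\, d\mu(x) \lesssim_N \langle\mu\rangle_\gamma R^d \sum_{k \ge 0} (1+2^k)^{-N} (2^k R^{-1})^\gamma \lesssim \langle\mu\rangle_\gamma R^{d-\gamma},
\]
choosing $N > \gamma$. Combining the three steps gives $\|F\|_{L^2(d\mu)}^2 \lesssim \langle\mu\rangle_\gamma R^{d-\gamma} \|F\|_{L^2(\mathbb R^d)}^2$, which is the claim.

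\textbf{Expected obstacle.} There is no serious analytic difficulty here; the lemma is quoted from \cite{HKL} and the proof is routine. The only point requiring a little care is the first step — making precise the pointwise bound $|F(x)|^2 \lesssim (|F|^2 \ast |\psi_R|)(x)$ with an $L^1$-normalized rapidly decaying kernel adapted to scale $R^{-1}$ — and then ensuring the kernel's tail decay $N$ is taken large enough (namely $N > \gamma$, which is fine since $\gamma \le d$) for the dyadic annulus sum to converge after being weighted by the $\mu$-measure growth $2^{k\gamma}$. Everything else is Fubini and the defining inequality \eqref{mu}.
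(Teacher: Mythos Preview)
Your argument is correct. Note, however, that the paper does not actually give a proof of this lemma --- it is simply quoted from \cite{HKL} and used as a black box --- so there is nothing in the paper to compare against. The route you take (reproducing kernel $F = F\ast\psi_R$ from band-limitedness, Cauchy--Schwarz to get $|F|^2 \lesssim |F|^2\ast|\psi_R|$, Fubini, then the dyadic-shell estimate $\int|\psi_R(z-\cdot)|\,d\mu \lesssim \langle\mu\rangle_\gamma R^{d-\gamma}$) is the standard proof of this fact and is essentially what appears in \cite{HKL}. One cosmetic point: in your first step the function $\phi$ with $\widehat\phi$ simultaneously supported on $B(0,1)$ and $\ge 1$ on $B(0,1)$ is not quite consistent at the boundary, but you drop $\phi$ immediately in favor of $\psi_R$, so this does not affect the argument.
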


By  translation and Plancherel's theorem, we may assume that $I=[0,\delta]$ with $\delta \le \min\{ R^{1-\alpha},1\}$. 
We  may further assume $R^{-\alpha}<\delta$ since \eqref{main2} follows by the Sobolev embedding and Lemma \ref{easy} if  $\delta \le R^{-\alpha}$.

For a givne $\gamma$-dimensional measure $\mu$, 
we denote by $\mu_R$ the  measure defined by the relation
\footnote{$\mu_R$ is a positive Borel measure by the Riesz representation theorem.}
\begin{align}\label{mu-r}
\int F(x)\,d\mu_R(x)= R^{\gamma} \int F(R x)\,d\mu(x), \quad F \in C_0(\mathbb R^d).
\end{align}
It is easy to see that $\mu_R$ is a $\gamma$-dimensional measure in $\mathbb R^d$
such that 
\[ \mu_R(B(x,r))\le C \langle\mu \rangle_\gamma r^\alpha\]  for some $C>0$.
Changing variables $(x,t) \rightarrow (R^{-1}x, R^{-\alpha} t)$ and $\xi \rightarrow R\xi$, we see that \eqref{max22} is equivalent to 
\begin{align}\label{dil}
\textstyle \big\| \sup_{t \in[0, R^\alpha]} | e^{it(-\Delta)^{\alpha/2}} f_R | \big\|_{L^{2}(B(0,R),d \mu_R)} \le C
R^s \nm\mu \|f_R \|_{L^2}
\end{align}
where $\widehat {f_R}(\xi)=R^{\frac d2} \widehat f(R \xi)$.
Note that $\|f_R\|_2=\|f\|_2$ and $\widehat {f}_R$ is supported on $\mathbb A_1$.
Let us denote $R' =\min \{R, R^\alpha\}$.  We claim that  the estimate \eqref{dil} is equivalent to the seemingly weaker estimate 
\begin{align}\label{dil2}
\textstyle \big\| \sup_{t \in[0, R']} | e^{it(-\Delta)^{\alpha/2}} g | \big\|_{L^{2}(B(0,R'),d \mu_R)} \le C
R^s\nm\mu \|g \|_{L^2}
\end{align}
for $R\ge1$ whenever $\widehat g$ is supported on $\mathbb A_1$. 
To show this, we only need to prove  that  \eqref{dil2} implies \eqref{dil} since the converse  is trivially true.
When $\alpha>1$, the implication from \eqref{dil2} to \eqref{dil} was shown in  \cite{CLV} (also, see \cite{L, MYZ}) when $\mu_R$ is the Lebesgue measure and $\alpha$ is an integer.
It is easy to see that the argument in \cite{CLV} works for general $\gamma$-dimensional measure $\mu_R$.
When $0<\alpha\le 1$,  using Lemma \ref{loc} with $R$ replaced by  $R^\alpha$, we get \eqref{dil} from \eqref{dil2}.
This proves the claim.

 We  now show that \eqref{max22} and  \eqref{main2} are equivalent. Recall that we are assuming that $R^{-\alpha}<\delta$. 
Changing variables $(x,t) \rightarrow (R^{-1}x, R^{-\alpha} t)$, $\xi \rightarrow R\xi$  in \eqref{main2} as above, we see that
\eqref{main2} is equivalent to 
\begin{align}\label{max-2}
\big\| \sup_{t \in[0, R^\alpha\delta]} | e^{it(-\Delta)^{\alpha/2}} f_R| \big\|_{L^{2}(B(0,R),d \widetilde\mu_R)} \le C'
(R^\alpha \delta)^{\max\{ s, \frac s \alpha \}} \nm {\widetilde\mu} \|f_R\|_{L^2}.
\end{align}
By Lemma \ref{loc} with $R$ replaced by $R^\alpha \delta$,  \eqref{max-2} follows from 
\begin{align}\label{dil3}
\big\| \sup_{t \in[0, R^\alpha\delta]} | e^{it(-\Delta)^{\alpha/2}} f_R| \big\|_{L^{2}(B(0,R^{\alpha}\delta),d \widetilde\mu_R)} \le C'
(R^\alpha \delta)^{\max\{ s, \frac s \alpha \}} \nm{\widetilde \mu} \|f_R\|_{L^2}.
\end{align}
Thus, \eqref{dil3} and \eqref{max-2} are trivially  equivalent.  Therefore,  to show the equivalence of \eqref{max22} and  \eqref{main2}, 
it is enough to prove that of \eqref{dil2} and \eqref{dil3}. Indeed, 
it is clear that  \eqref{dil3} follows from  \eqref{dil2}   by replacing $R'$ in \eqref{dil2} with $R^\alpha \delta$.
Conversely, if we replace  $R^\alpha\delta$ in \eqref{dil3} with $R'$,
 we get  \eqref{dil2} as long as $\delta>R^{-\alpha}$.
\qed

\section{Maximal estimate for sequential Schr\"odinger means}\label{section4}

In this section, we prove Theorem \ref{thmpt} and obtain results regarding upper bounds on the dimension of the divergence set of $e^{it_n(-\Delta)^{\alpha/2}}f $. 
The results are consequence of extension of  the maximal estimates  to general measure. 
See Section \ref{sec:div}.

\subsection{$L^2$-maximal estimates}\label{subsec31}
Making use of Lemma \ref{FF}, we deduce the maximal estimates for the sequential Schr\"odinger mean from the estimate  \eqref{max22}.

\begin{prop}\label{sch}
Let $R\ge1$, $\alpha>0$, and $0<r<\infty$. Suppose  that \eqref{max22} holds for some $s=s_*>0$ whenever $\mu$ is a $\gamma$-dimensional measure in $\mathbb R^d$
and   $\supp \widehat f \subset \mathbb A_R$. Let 
\begin{align}\label{ssr}
\tilde s_*=\frac{r\alpha}{r\min\{\alpha,1\}+2s_*}s_*.
\end{align}
Then, if
$\{t_n\} \in \ell^{r,\infty}$,  there is a constant $C>0$ such that
\begin{align}\label{L2-L2}
\|\sup_n |e^{it_n(-\Delta)^{\alpha/2}}f| \|_{L^{2}(B(0,1),d\mu)}
\le C R^{\frac{d-\gamma}2+s}  \nm{\mu} \|f\|_{L^2}
\end{align}
holds for $s\ge\min\{ s_*, \tilde s_{*} \}$ whenever  $\mu$ is  $\gamma$-dimensional and $\supp \widehat f \subset \mathbb A_R$. 
\end{prop}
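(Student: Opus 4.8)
The plan is to reduce the full maximal estimate for the sequence $\{t_n\}$ to a sum of temporally localized maximal estimates on dyadic intervals, using Lemma \ref{FF} to pass from the global hypothesis \eqref{max22} to the localized estimate \eqref{main2}, and then to count how many $t_n$ fall into each scale using the $\ell^{r,\infty}$ hypothesis. First I would handle the trivial regime: when $s \ge s_*$ there is nothing new to prove beyond \eqref{max22} itself, since $\sup_n$ is dominated by $\sup_{0<t\le 1}$ (after noting $|t_n|\le 1$ for all but finitely many $n$, which can be absorbed, or by rescaling; one has to be slightly careful since the $t_n$ need not lie in $[0,1]$, but only finitely many exceed any fixed threshold, and each single $e^{it_n(-\Delta)^{\alpha/2}}$ is bounded on $H^s$ into $L^2_{\mathrm{loc}}(d\mu)$ via Lemma \ref{easy}). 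So the content is the range $\tilde s_* \le s < s_*$, and it suffices to treat $s = \tilde s_*$.

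Next I would perform the dyadic decomposition in the time variable. For $j \ge 0$ set $E_j = \{ n : 2^{-j-1} < |t_n| \le 2^{-j} \}$; since $\{t_n\}\in\ell^{r,\infty}$ we have $\# E_j \lesssim 2^{jr}$ (up to the implicit constant, and possibly finitely many $n$ with $|t_n|>1$ handled separately). We only need to run the dyadic sum down to the scale where the interval length $2^{-j}$ meets the natural cutoff $R^{-\alpha}$: for $2^{-j}\le R^{-\alpha}$ the phase $t_n|\xi|^\alpha$ is $O(1)$ on $\mathbb A_R$, so on that range $\sup$ over such $t_n$ is controlled by Lemma \ref{easy} (Sobolev embedding) with a bound $\lesssim R^{(d-\gamma)/2}\langle\mu\rangle_\gamma^{1/2}\|f\|_2$, which is acceptable. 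For $R^{-\alpha} < 2^{-j} \le 1$, i.e. $0\le j \le \lceil \alpha\log_2 R\rceil$, I bound
\[
\Big\| \sup_{n\in E_j} |e^{it_n(-\Delta)^{\alpha/2}}f|\Big\|_{L^2(B(0,1),d\mu)}
\le \Big(\sum_{n\in E_j}\big\| e^{it_n(-\Delta)^{\alpha/2}}f\big\|_{L^2(B(0,1),d\mu)}^2\Big)^{1/2},
\]
and here is the key point: each $t_n$ with $n\in E_j$ lies in an interval $I$ of length $\sim 2^{-j}\le R^{1-\alpha}$ (valid since $2^{-j}\le 1\le R^{1-\alpha}$ when $\alpha\le 1$, and when $\alpha>1$ one needs $2^{-j}\le R^{1-\alpha}$, so the decomposition should actually be run only down to $j$ with $2^{-j}\ge R^{-\alpha}$ and the intervals grouped accordingly — I will instead pick, for each $j$, a single interval $I_j\ni t_n$ of length $\min\{2^{-j},R^{1-\alpha}\}$ containing as many of the $E_j$ as fit, covering $E_j$ by $O(\max\{1, 2^{-j}/R^{1-\alpha}\})$ such intervals). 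Applying Lemma \ref{FF} to each such interval gives the bound $\lesssim R^{(d-\gamma)/2}(1+R^\alpha\cdot 2^{-j})^{\max\{s_*,s_*/\alpha\}}\langle\mu\rangle_\gamma^{1/2}\|f\|_2$ for the single-$t_n$ estimate (a single $t_n$ is of course dominated by the sup over $I_j$). Summing over the $\lesssim 2^{jr}$ indices in $E_j$ and over $0\le j\lesssim \alpha\log_2 R$ yields
\[
\Big\| \sup_{n} |e^{it_n(-\Delta)^{\alpha/2}}f|\Big\|_{L^2(B(0,1),d\mu)}
\lesssim R^{\frac{d-\gamma}{2}}\langle\mu\rangle_\gamma^{1/2}\|f\|_2 \sum_{0\le j\lesssim \alpha\log_2 R} 2^{jr/2}\,(R^\alpha 2^{-j})^{\beta_*},
\]
where $\beta_* = \max\{s_*, s_*/\alpha\}$, i.e. $\beta_* = s_*$ if $\alpha\ge 1$ and $\beta_* = s_*/\alpha$ if $\alpha<1$; in both cases $\alpha\beta_* = s_*/\min\{\alpha,1\}\cdot\min\{\alpha,1\}$... more simply $\alpha\beta_* = \max\{\alpha s_*, s_*\} $. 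The geometric-type sum $\sum_j 2^{j(r/2-\beta_*)} R^{\alpha\beta_*}$ is then optimized.

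The final step is the bookkeeping that produces exactly the exponent $\tilde s_*$ in \eqref{ssr}. The sum $\sum_{0\le j \le \alpha\log_2 R} 2^{j(r/2 - \beta_*)}$ behaves like $R^{\alpha(r/2-\beta_*)_+}$ when $r/2>\beta_*$, like $\log R$ when $r/2=\beta_*$, and is $O(1)$ when $r/2<\beta_*$; multiplying by $R^{\alpha\beta_*}$ gives, in the first case, $R^{\alpha r/2}$ — but this overshoots, so the point is that for $2^{-j}$ below $R^{1-\alpha}$ (when $\alpha>1$) the interval length saturates at $R^{1-\alpha}$ and the factor $(1+R^\alpha|I_j|)^{\beta_*}$ saturates at $R^{\beta_*}$ while we must also pay $2^{-j}/R^{1-\alpha}$ extra intervals to cover $E_j$; redoing the sum with this saturation, the worst scale $j_0$ is where $2^{-j_0}\sim R^{-\alpha + \theta}$ balances the growth of $\#E_j$ against the decay of the localized norm, and one finds $R^{s}$ with $s = \tilde s_* = \frac{r\alpha}{r\min\{\alpha,1\}+2s_*}s_*$. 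I expect the \textbf{main obstacle} to be precisely this optimization: getting the interplay between the counting bound $\#E_j\lesssim 2^{jr}$, the covering cost when $2^{-j}>R^{1-\alpha}$, and the saturated localized exponent to collapse cleanly to $\tilde s_*$ rather than a weaker exponent — in particular making sure the two regimes $\alpha\le 1$ and $\alpha>1$ (which is why $\min\{\alpha,1\}$ appears) are handled uniformly, and that the borderline/endpoint $j$-values do not cost an unwanted $\log R$ that would prevent reaching $s = \tilde s_*$ (one can afford a $\log$ loss by taking $s$ slightly above $\tilde s_*$ and then noting the statement allows $s\ge\min\{s_*,\tilde s_*\}$, or absorb it by a standard summation trick with a small $\epsilon$). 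Everything else is a routine application of Lemma \ref{FF}, Lemma \ref{easy}, Minkowski/Cauchy--Schwarz, and the $\ell^{r,\infty}$ counting bound.
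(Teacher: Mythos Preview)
Your approach has a genuine gap in the way you combine the two estimates at each dyadic scale. After writing
\[
\Big\| \sup_{n\in E_j} |e^{it_n(-\Delta)^{\alpha/2}}f|\Big\|_{L^2(B(0,1),d\mu)}
\le \Big(\sum_{n\in E_j}\big\| e^{it_n(-\Delta)^{\alpha/2}}f\big\|_{L^2(B(0,1),d\mu)}^2\Big)^{1/2},
\]
you then bound each \emph{individual} term by Lemma \ref{FF} applied to an interval $I_j$ of length $\sim 2^{-j}$. This pays \emph{both} the counting cost $(\#E_j)^{1/2}\lesssim 2^{jr/2}$ \emph{and} the localized-maximal cost $(R^\alpha 2^{-j})^{\beta_*}$. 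But those two costs are alternatives, not multiplicative factors: once you pass to the $\ell^2$ sum over $n\in E_j$, a single time $t_n$ costs only $R^{(d-\gamma)/2}$ by Lemma \ref{easy}, not $(R^\alpha 2^{-j})^{\beta_*}R^{(d-\gamma)/2}$; conversely, if you invoke Lemma \ref{FF} on the interval $I_j$, it already controls $\sup_{n\in E_j}$ directly and no $\ell^2$ sum is needed. Your displayed sum
\[
\sum_{0\le j\lesssim \alpha\log_2 R} 2^{jr/2}\,(R^\alpha 2^{-j})^{\beta_*}
\]
therefore gives (for $\alpha\ge1$, say) either $R^{\alpha r/2}$ or $R^{\alpha s_*}$, both strictly worse than $R^{\tilde s_*}$; the ``saturation'' and covering adjustments you sketch do not repair this, because the problem is the double payment, not the endpoint of the $j$-sum.

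The paper's argument avoids this by using a \emph{single} threshold $\delta$ rather than a full dyadic decomposition: for $t_n<\delta$ it applies Lemma \ref{FF} once to the whole interval $[0,\delta]$, giving $R^{(d-\gamma)/2}(R^\alpha\delta)^{\beta_*}$; for $t_n\ge\delta$ it uses $\ell^2\hookrightarrow\ell^\infty$ together with Lemma \ref{easy} (\emph{not} Lemma \ref{FF}) on each of the $\lesssim\delta^{-r}$ remaining terms, giving $\delta^{-r/2}R^{(d-\gamma)/2}$. Optimizing $\delta$ (namely $\delta=R^{-2s_*\alpha/(r+2s_*)}$ for $\alpha\ge1$ and $\delta=R^{-2s_*\alpha/(r\alpha+2s_*)}$ for $\alpha<1$) balances the two bounds and yields exactly $R^{\tilde s_*}$, with the constraint $R^{-\alpha}\le\delta\le\min\{R^{1-\alpha},1\}$ holding precisely when $\tilde s_*\le s_*$. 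Your dyadic scheme can be salvaged by using, at each scale $j$, the \emph{minimum} of the two bounds $2^{jr/2}$ and $(R^\alpha 2^{-j})^{\beta_*}$ (i.e.\ counting for small $j$, Lemma \ref{FF} for large $j$); the crossover scale is exactly the paper's $\delta$, so the two arguments coincide once corrected.
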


When $\alpha>1$, Proposition \ref{sch} is meaningful only for $r<2s_\ast/(\alpha-1)$.

\begin{proof}[Proof of Proposition \ref{sch}]
We may assume $\tilde s_* \le s_*$ since \eqref{L2-L2} trivially holds for $s \ge s_*$ by the maximal estimate  \eqref{max22}.
For $0<\delta <1$, let us set 
\[ \mathrm N(\delta)=\{ n\in\mathbb{N} :~ t_n < \delta \}.\]
Since $\{t_n\} \in \ell^{r, \infty}(\mathbb{N})$,
there is a uniform constant $C_0>0$ such that
    \begin{equation}\label{number}
    |\mathrm N(\delta)^{\mathrm c}|
    \leq C_0 \delta^{-r}.
    \end{equation}
Then, it follows that
\[ \big\|\sup_n |e^{it_n(-\Delta)^{\alpha/2}}f| \big\|_{L^2(B(0,1),\mu)} \le \mathcal I+\mathcal{I\!\!I},\]
where
    \begin{align*}
    \mathcal I&= \big\| \sup_{n\in \mathrm N(\delta)} | e^{it_n (-\Delta)^{\alpha/2}}  f| \big\|_{L^2(B(0,1),d\mu)} ,\\
    \mathcal {I\!\!I}&= \big\| \sup_{n\in \mathrm N(\delta)^{\mathrm c}} | e^{it_n (-\Delta)^{\alpha/2}} f| \big\|_{L^2(B(0,1),d\mu)}.
    \end{align*}

We consider $\mathcal{I}$ first. Since $ \sup_{n\in \mathrm N(\delta)} | e^{it_n (-\Delta)^{\alpha/2}}  f|\le  \sup_{0<t\le \delta} | e^{it (-\Delta)^{\alpha/2}}  f|$,  by Lemma \ref{FF} we have
\begin{align}\label{II}
\mathcal {I} \le C
R^{\frac{d-\gamma}2} (R^\alpha \delta)^{\max\{s_*, \frac {s_*} \alpha \}} \nm\mu \|f\|_{L^2}
\end{align}
provided that $\supp \widehat f \subset \mathbb A_R$ and $R^{-\alpha} \le \delta \le \min\{ R^{-\alpha+1},1\}$.
To handle $\mathcal {I\!\!I}$, we first note that 
$$
\|e^{it_n(-\Delta)^{\alpha/2}} f \|_{L^2(B(0,1),d\mu)}\lesssim R^{\frac{d-\gamma}2}  \nm\mu \|f\|_2, 
$$
which  follows by Lemma \ref{easy} and Plancherel's theorem. So, by the embedding $\ell^2 \subset \ell^\infty$, combining the above estimate and \eqref{number}, we obtain
\[
\textstyle \mathcal {I\!\!I} 
\lesssim  \big( \sum_{n\in \mathrm N(\delta)^{\mathrm c}} \big\| e^{it_n (-\Delta)^{\alpha/2}} f \big\|_{L^2(B(0,1),d\mu)}\big)^{1/2}\lesssim  C_0^{\frac 12}\delta^{-\frac r2} R^{\frac{d-\gamma}2} \nm\mu \|f\|_{L^2}.
\]

Now we prove \eqref{L2-L2} by optimizing the estimates  with a suitable choice of $\delta$.  
When $\alpha \ge 1$, we take $\delta=R^{- 2 s_*\alpha /(r+2s_*)}$, which gives \eqref{L2-L2}  for
$
s\ge rs_*\alpha/(r+2s_*) .
$
When $0<\alpha <1$, we choose $\delta=R^{- 2 s_*\alpha /(r\alpha +2s_*)}$ and obtain \eqref{L2-L2} for
$
s\ge rs_*\alpha/(r\alpha+2s_*).
$
In both cases, one can easily check  $R^{-\alpha} \le \delta \le \min \{R^{-\alpha+1},1\}$ for  $r$ and $s_*$ satisfying
$\tilde s_* \le s_*$. Indeed, if $\alpha>1$, then $\delta \le R^{-\alpha+1}$ since $2s_*+r\ge r\alpha $.
When $0<\alpha\le1$, we have $\delta \le 1$ since $s_*>0$.
\end{proof}

Theorem \ref{thmpt} is an immediate consequence of the following. 

\begin{coro}\label{imp-Hs} Let  $0<\gamma \le d$ and $0<r<\infty$. Suppose 
\begin{align}\label{strongL2}
\big\| \sup_{0<t<1} |e^{it(-\Delta)^{\alpha/2}}f| \big\|_{L^{2}(B(0,1),d\mu)}
\le C\nm\mu \|f\|_{H^{\frac{d-\gamma}2+s_*}}
\end{align}
holds  for some  $0<s_*$ whenever $\mu$ is a $\gamma$-dimensional measure in $\mathbb R^d$. Then, if   $\{t_n\} \in \ell^{r,\infty}$,  there is a constant $C'>0$ such that
\begin{align}\label{H-L2}
\|\sup_{n\in\mathbb{N}} |e^{it_n(-\Delta)^{\alpha/2}}f| \|_{L^{2}(B(0,1),d\mu)}
\le C' \nm\mu \|f\|_{H^{\frac{d-\gamma}2+s}}
\end{align}
holds for $s\ge\min\{ s_*, \tilde s_*\}$ where $\tilde s_*$ is given by \eqref{ssr}.
\end{coro}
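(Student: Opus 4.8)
The plan is to reduce Corollary~\ref{imp-Hs} to Proposition~\ref{sch} by means of a Littlewood--Paley decomposition in the frequency variable. First I would fix a standard dyadic partition of unity $1 = \psi_0(\xi) + \sum_{k\ge 1}\psi(2^{-k}\xi)$ with $\psi$ supported in an annulus $\{1/2 \le |\xi|\le 2\}$, and write $f = \sum_{k\ge 0} f_k$ where $\widehat{f_k} = \psi(2^{-k}\cdot)\widehat f$ for $k\ge 1$ (and $f_0$ the low-frequency part). Each $f_k$ with $k\ge1$ has $\widehat{f_k}$ supported in $\mathbb A_{R}$ with $R = 2^{k}\ge 1$, so the hypothesis \eqref{strongL2} applies at frequency $R$; after peeling off the $H^{(d-\gamma)/2+s_*}$ norm this is exactly the frequency-localized estimate \eqref{max22} with $s = s_*$ (with the explicit $R$-power made visible because $\|f_k\|_{H^\sigma}\sim 2^{k\sigma}\|f_k\|_{L^2}$). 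Thus Proposition~\ref{sch} is applicable to each $f_k$ and yields, for $\{t_n\}\in\ell^{r,\infty}$,
\[
\big\|\sup_n |e^{it_n(-\Delta)^{\alpha/2}}f_k|\big\|_{L^2(B(0,1),d\mu)} \le C\, 2^{k(\frac{d-\gamma}{2}+s)}\,\nm\mu\,\|f_k\|_{L^2}
\]
for $s\ge \min\{s_*,\tilde s_*\}$, where $\tilde s_*$ is given by \eqref{ssr}.

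Next I would assemble the pieces. Since $\sup_n |e^{it_n(-\Delta)^{\alpha/2}}f| \le \sum_k \sup_n|e^{it_n(-\Delta)^{\alpha/2}}f_k|$, the triangle inequality in $L^2(d\mu)$ gives
\[
\big\|\sup_n |e^{it_n(-\Delta)^{\alpha/2}}f|\big\|_{L^2(B(0,1),d\mu)} \le \sum_{k\ge 0} \big\|\sup_n |e^{it_n(-\Delta)^{\alpha/2}}f_k|\big\|_{L^2(B(0,1),d\mu)}.
\]
For $k\ge 1$ I bound each summand as above; for the single term $k=0$ I use the trivial bound coming from Lemma~\ref{easy} and Plancherel, namely $\|\sup_n|e^{it_n(-\Delta)^{\alpha/2}}f_0|\|_{L^2(B(0,1),d\mu)}\lesssim \nm\mu\|f_0\|_{L^2}\lesssim \nm\mu\|f\|_{H^{(d-\gamma)/2+s}}$ (here I would, as is standard, replace $s$ by $s+\varepsilon$ or choose the dyadic sum to converge; see the remark below on the endpoint). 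The series $\sum_{k\ge1} 2^{k(\frac{d-\gamma}{2}+s)}\|f_k\|_{L^2}$ is then estimated by Cauchy--Schwarz against $\sum_k 2^{-k\varepsilon}$ after writing $2^{k(\frac{d-\gamma}{2}+s)}\|f_k\|_{L^2} = 2^{-k\varepsilon}\cdot 2^{k(\frac{d-\gamma}{2}+s+\varepsilon)}\|f_k\|_{L^2}$ and using $\big(\sum_k 2^{2k(\frac{d-\gamma}{2}+s+\varepsilon)}\|f_k\|_{L^2}^2\big)^{1/2}\sim \|f\|_{H^{(d-\gamma)/2+s+\varepsilon}}$. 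Replacing $s$ by the original exponent (absorbing $\varepsilon$) yields \eqref{H-L2}; to get the stated $s\ge\min\{s_*,\tilde s_*\}$ without loss one runs the argument at the fixed exponent $\min\{s_*,\tilde s_*\}$ while exploiting that \eqref{strongL2} is assumed with a genuine constant and that the geometric series in $k$ converges as soon as $\frac{d-\gamma}{2}+s > \frac{d-\gamma}{2}+s-\delta$ for any small $\delta>0$, i.e.\ using the $\ell^1$-triangle inequality together with the fact that the single-scale gain $R^{-s_*+\min\{s_*,\tilde s_*\}}$ or the extra $R^{-(s_*-\tilde s_*)}$ decay present in Proposition~\ref{sch} makes the sum summable; I would state this carefully in the write-up.

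Finally, Theorem~\ref{thmpt} follows from Corollary~\ref{imp-Hs} by specializing to $\gamma = d$, in which case $\mu$ may be taken to be Lebesgue measure on $B(0,1)$, $\langle\mu\rangle_\gamma\sim 1$, and \eqref{strongL2}, \eqref{H-L2} become exactly \eqref{maxx}, \eqref{tn maxx}; the relation \eqref{rel} is precisely $s\ge\min\{s_*,\tilde s_*\}$ with $\tilde s_*$ as in \eqref{ssr}. The main obstacle I anticipate is not conceptual but bookkeeping: making the dyadic summation produce \emph{exactly} the exponent $\min\{s_*,\tilde s_*\}$ rather than $\min\{s_*,\tilde s_*\}+\varepsilon$. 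This is handled by noting that Proposition~\ref{sch}, when $\tilde s_*<s_*$, is proved by an optimization in $\delta$ that in fact delivers a small power-of-$R$ gain for $s$ strictly above $\tilde s_*$ and the borderline value $s=\tilde s_*$ is attained with a fixed constant (no $\varepsilon$-loss), so the $\ell^1$ sum over dyadic scales of $R^{\frac{d-\gamma}{2}+s}\|f_k\|_{L^2}$ against $\|f\|_{H^{(d-\gamma)/2+s}}$ converges by Cauchy--Schwarz precisely because the contributions at distinct scales are almost orthogonal in $H^s$ — the one genuinely delicate point is the low-frequency term $f_0$ and the case $\tilde s_* = s_*$, both of which are disposed of by the elementary Lemma~\ref{easy}.
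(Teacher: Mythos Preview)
Your reduction to Proposition~\ref{sch} via Littlewood--Paley is correct and proves the corollary for every $s>\min\{s_*,\tilde s_*\}$, but the endpoint $s=\min\{s_*,\tilde s_*\}$ is a genuine gap that your final paragraph does not close. After applying Proposition~\ref{sch} at each scale you face
\[
\sum_{k\ge 0} 2^{k(\frac{d-\gamma}{2}+\tilde s_*)}\|f_k\|_{L^2},
\]
and this $\ell^1$-sum is \emph{not} controlled by $\|f\|_{H^{(d-\gamma)/2+\tilde s_*}}\sim\big(\sum_k 2^{2k(\frac{d-\gamma}{2}+\tilde s_*)}\|f_k\|_{L^2}^2\big)^{1/2}$ without an $\varepsilon$-loss: take $\|f_k\|_{L^2}=2^{-k(\frac{d-\gamma}{2}+\tilde s_*)}$ for $1\le k\le N$ to see the $\ell^1$ side is $N$ while the $H^s$ side is $\sqrt N$. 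Your claimed ``extra $R^{-(s_*-\tilde s_*)}$ decay'' in Proposition~\ref{sch} does not exist---at $s=\tilde s_*$ the bound is exactly $R^{(d-\gamma)/2+\tilde s_*}$, as the optimization in $\delta$ in the proof shows---and ``almost orthogonality in $H^s$'' cannot convert an $\ell^1$ outer sum into an $\ell^2$ one.

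The paper avoids this loss by \emph{not} simply summing the single-scale output of Proposition~\ref{sch}. Instead it decomposes simultaneously in frequency and in time: the sequence $\{t_n\}$ is partitioned into shells $\mathrm N_\ell=\{n: 2^{-2(\ell+1)\tilde s_*/r}<t_n\le 2^{-2\ell\tilde s_*/r}\}$, and for each $\ell$ one writes $f=\sum_{0\le k<\ell}P_kf+\sum_{k\ge 0}P_{\ell+k}f$. The point is that the \emph{offset} between the frequency index and the time index manufactures geometric decay. For the high-frequency piece $P_{\ell+k}f$ one uses $|\mathrm N_\ell|\lesssim 2^{2\tilde s_*\ell}$, the embedding $\ell^2\subset\ell^\infty$, and Lemma~\ref{easy}; after changing the summation variable to $\ell+k$ a factor $2^{-\tilde s_* k}$ appears, which sums in $k$ with no $\varepsilon$. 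For the low-frequency piece one goes back to Lemma~\ref{FF} (the temporally localized maximal estimate) rather than Proposition~\ref{sch}: on the range $k\lesssim \frac{2\tilde s_*}{r\alpha}\ell$ the interval $J_\ell$ has length $\le 2^{-\alpha k}$, so the factor $(1+R^\alpha|I|)^{\max\{s_*,s_*/\alpha\}}$ is $O(1)$; on the complementary range the identity $(2^{(\ell-k)\alpha-2\ell\tilde s_*/r})^{\max\{s_*,s_*/\alpha\}}=2^{-k\alpha\max\{s_*,s_*/\alpha\}}2^{\ell\tilde s_*}$ again produces a summable $2^{-k\max\{\alpha s_*,s_*\}+k\tilde s_*}$ (note $\tilde s_*<\max\{\alpha s_*,s_*\}$). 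In short, the paper reproves Proposition~\ref{sch} \emph{inside} the Littlewood--Paley sum, choosing the splitting parameter $\delta$ to depend on both the frequency scale and the time shell, and it is precisely this coupling that recovers the endpoint.
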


The estimate \eqref{strongL2} clearly implies \eqref{max22}. However, to prove Corollary \ref{imp-Hs}, we need to remove the frequency localization  in the estimate \eqref{L2-L2} so that 
 the right hand side of \eqref{L2-L2} is replaced by $C\nm\mu \|f\|_{H^{\frac{d-\gamma}2+s}}$.  
   This can be achieved by adapting  the argument in \cite{DS}.

\begin{proof}[Proof of Corollary \ref{imp-Hs}]
As before, we may assume $\tilde s_* \le s_*$. It suffices to show \eqref{H-L2} for $s\ge \tilde s_*$.
Let us choose a smooth function $\beta \in C_0^\infty((1/2,2))$ such that $\sum_k \beta(2^{-k}\cdot)=1$ and set $\beta_0=\sum_{k\le0} \beta(2^{-k}\cdot)$.
Let $P_k$, $k\ge 0$,  be the  projection operator defined by
$
\widehat{P_kf}(\xi)=\beta(2^{-k}|\xi|)\widehat f(\xi)$, $k\ge1$, and $\widehat{P_0f}(\xi)=\beta_0(|\xi|)\widehat f(\xi)$.

For $\ell \ge0$, we set
\[
\mathrm N_\ell=\big\{ n \in \mathbb N: 2^{-2(\ell+1)\tilde s_*/r}< t_n \le 2^{-2\ell \tilde s_*/r} \big\}.
\]
For each $\ell \ge0$, we write
$f=\sum_{0 \le k < \ell} P_kf + \sum_{k\ge 0} P_{\ell+k}f$.  So, we  have
\[\Big\| \sup_{n\in\mathbb{N}} | e^{it_n(-\Delta)^{\alpha/2}}f| \Big\|_{L^2(B(0,1),d\mu)}
\le \mathrm {I} + \mathrm {I\!I},
\]
where
\begin{align*}
\mathrm {I} &= \sup_{\ell\ge0}\Big\| \sup_{n\in\mathrm N_\ell } \big|  e^{it_n(-\Delta)^{\alpha/2}} (\sum_{0 \le k< \ell}P_{k} f) \big| \Big\|_{L^2(B(0,1), d\mu)},\\
\mathrm {I\!I} &= \sup_{\ell \ge0}\Big\| \sup_{n\in\mathrm N_\ell} \big| \sum_{k \ge 0} e^{it_n(-\Delta)^{\alpha/2}}P_{\ell+k} f \big| \Big\|_{L^2(B(0,1),d\mu)}.
\end{align*}

We consider $\mathrm{I\!I}$ first. Since $\{t_n\} \in \ell^{r,\infty}$, it follows that
$|\mathrm N_\ell| \lesssim 2^{2 \tilde s_* \ell}$. As before, by the embedding  $\ell^2 \subset \ell^\infty$ and then applying Lemma \ref{easy} and Plancherel's theorem, we get
\[
\Big\| \sup_{n\in\mathrm N_\ell} \big| \sum_{k \ge 0} e^{it_n(-\Delta)^{\alpha/2}}P_{\ell+k} f \big| \Big\|_{L^2(B(0,1),d\mu)}
\lesssim 
\nm\mu \sum_{k\ge0} \Big( \sum_{\ell\ge0} 2^{(d-\gamma)(\ell+k)+2\tilde s_*\ell}\| P_{\ell+k}f\|_{L^2}^2\Big)^\frac12. 
\]
Thus,  we obtain
\begin{align}\label{sum2}
\mathrm {I\!I}
\lesssim  \nm\mu \sum_{k\ge0} 2^{-\tilde s_* k}
 \| f\|_{H^{\frac{d-\gamma}2+ \tilde s_*}}
\lesssim  \nm\mu \| f\|_{H^{\frac{d-\gamma}2+\tilde s_*}}.
\end{align}

We now turn to $\mathrm{I}$. Note that ${2\tilde s_*}<{r\alpha}$ and  decompose
$$
\sum_{0 \le k< \ell}P_{k} f
=\sum_{0 \le k <\frac{2\tilde s_*}{r\alpha}\ell}P_kf+ \sum_{0 < k \leq \frac{r\alpha-2\tilde s_*}{r\alpha} \ell } P_{\ell-k}f.
$$ 
By  the Minkowski inequality, we have $\mathrm I \le \mathrm I_a+\mathrm I_b$ where
\begin{align*}
\mathrm I_a&= \sum_{k\ge0} \big\| \sup_{\ell>\frac{r \alpha}{2\tilde s_*}k} \, \sup_{n \in \mathrm N_\ell} |e^{it_n(-\Delta)^{\alpha/2}} P_k f| \big\|_{L^2(B(0,1),d\mu)},\\[1ex]
\mathrm I_b&=\sup_{\ell\ge0}\Big\| \sup_{n\in\mathrm N_\ell } \big|  e^{it_n(-\Delta)^{\alpha/2}} (\sum_{0 < k \leq \frac{r\alpha-2\tilde s_*}{r\alpha} \ell}P_{\ell-k} f) \big| \Big\|_{L^2(B(0,1), d\mu)}.
\end{align*}
Regarding $\mathrm I_a$, note that  $\bigcup_{\ell>\frac{r \alpha}{2\tilde s_*}k} \mathrm N_\ell \subset [0, 2^{-\alpha k}]$.
By \eqref{strongL2} and Lemma \ref{FF} with $R=2^{k}$ and $I=[0, 2^{-\alpha k}]$, we obtain 
\begin{align}\label{sum1a}
\mathrm I_a \lesssim \nm\mu  \sum_{ k\ge0} 2^{\frac{d-\gamma}2k} \|P_k f\|_2 \lesssim  \nm\mu \|f\|_{H^{\frac{d-\gamma}2+s}}
\end{align}
for $s>0.$ 

To handle $\mathrm I_b$, we note that $t_n \in J_\ell:=[0, 2^{-2 \ell \tilde s_*/r}]$
if $n \in \mathrm N_\ell$, and $2^{(\ell-k)\alpha} |J_\ell| \ge1$ since $k \le \frac{r\alpha-2\tilde s_*}{r\alpha}\ell$.
By  Lemma \ref{FF} with $R=2^{\ell-k}$ and $I=J_\ell$, it follows that
\[
\mathrm I_b \lesssim \nm\mu  \sup_{\ell \ge0}
\sum_{0 \le k \le \frac{r\alpha-2\tilde s_*}{r\alpha}\ell}  \ (2^{\ell-k})^{\frac {d-\gamma}2} \big(2^{(\ell-k)\alpha-2\ell \tilde s_*/r} \big)^{\max\{s_*,\frac {s_*}\alpha\}} \|P_{\ell-k}f\|_{L^2}.
\]
Using  \eqref{ssr} and the fact that $\min \{\alpha,1\} \times \max\{s_*, \frac{s_*}\alpha\}=s_*$, one can easily see that
$
\big(2^{(\ell-k)\alpha-2\ell \tilde s_*/r} \big)^{\max\{s_*,\frac {s_*}\alpha\}}=2^{-k\alpha \max \{ s_*, \frac{s_*}\alpha\}} 2^{\ell \tilde s_*}.
$
So, by the embedding $\ell^2 \subset \ell^\infty$ and Minkowski's inequality, we get
\begin{align*}
\mathrm I_b& \lesssim \nm\mu \sum_{k \ge 0} \Big( \sum_{\ell >\frac{r\alpha}{r\alpha-2\tilde s_*} k} 
(2^{\ell-k})^{d-\gamma} 2^{2\ell \tilde s_*} 2^{-2k \max \{\alpha s_*, s_*\}}
\| P_{\ell-k} f\|_{L^2}^2 \Big)^{1/2} 
\\
&\lesssim \nm\mu \sum_{k\ge0} 2^{(\tilde s_*- \max\{ \alpha s_*, s_*\})k} \| f\|_{H^{\frac{d-\gamma}2+\tilde s_*}}.
\end{align*}
Thus, we have 
$
\mathrm I_b
\lesssim \nm\mu \| f\|_{H^{\frac{d-\gamma}2+\tilde s_*}}.
$
Combining this and  the estimates \eqref{sum2} and \eqref{sum1a}, we obtain \eqref{H-L2}.
\end{proof}

\subsection{Dimension of divergence set}\label{sec:div}
Via the implication in Corollary \ref{imp-Hs},  we  can obtain upper bounds on the divergence set $\mathfrak D^{\alpha,d}(s,r)$ making use of the known estimates for 
the maximal Schr\"odinger operator  $f\to \sup_{0<t\le 1} |e^{it(-\Delta)^{\alpha/2}}f |$.
We start with recalling the following lemma (\cite{OO, HKL}).

\begin{lem}\label{imply_max}
Suppose
\begin{align}\label{est11}
\| e^{it(-\Delta)^{\alpha/2}}f\|_{L^2(B(0,1),d \nu)} \le C' \nm\nu
R^{\frac{d-\gamma}2+s} \|f\|_{L^2(\mathbb R^d)},
\end{align}
holds for some $s \in \mathbb R$ whenever $\supp \widehat f \subset \mathbb A_R$ and  $\nu$ is a  $\gamma$-dimensional measure in $\mathbb R^{d+1}$.
Then there exists $C>0$ such that \eqref{max22} holds whenever $\mu$ is a $\gamma$-dimensional measure in $\mathbb R^d$.
\end{lem}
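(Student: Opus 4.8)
\textbf{Proof proposal for Lemma \ref{imply_max}.}

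The plan is to pass from a space-time $\gamma$-dimensional measure $\nu$ on $\mathbb R^{d+1}$ (for which the \emph{fixed-time} estimate \eqref{est11} is assumed) to a purely spatial $\gamma$-dimensional measure $\mu$ on $\mathbb R^d$ (for which we want the \emph{maximal} estimate \eqref{max22}), by a linearization-plus-lifting argument. First I would linearize the supremum: by Stein's maximal principle, or simply by a measurable-selection argument on the compact time interval $[0,1]$, it suffices to bound $\| e^{it(x)(-\Delta)^{\alpha/2}}f\|_{L^2(B(0,1),d\mu)}$ uniformly over all measurable functions $t:\mathbb R^d\to[0,1]$, with a constant independent of the choice of $t(\cdot)$. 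The point is that, once the time is frozen to a measurable function of $x$, we are looking at the restriction of the space-time function $(x,t)\mapsto e^{it(-\Delta)^{\alpha/2}}f(x)$ to the graph $\Gamma=\{(x,t(x)):x\in B(0,1)\}$.

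The key step is then to produce, from $\mu$ and the graph map $x\mapsto(x,t(x))$, a $\gamma$-dimensional measure $\nu$ on $\mathbb R^{d+1}$ that is essentially the pushforward of $\mu$ under this map, so that $\int_{B(0,1)}|e^{it(x)(-\Delta)^{\alpha/2}}f(x)|^2\,d\mu(x)=\int|e^{it(-\Delta)^{\alpha/2}}f|^2\,d\nu$. The pushforward $\nu$ of a $\gamma$-dimensional $\mu$ under a $1$-Lipschitz-in-$x$ graph map need not itself be $\gamma$-dimensional if $t(\cdot)$ is wildly oscillating, so I would not apply \eqref{est11} to a single $\nu$ but rather first decompose $B(0,1)$ into finitely many pieces on which $t(\cdot)$ is controlled, or — more robustly — fatten the graph: replace the pushforward by $\nu=\mu\times(\text{normalized Lebesgue on a scale-}R^{-\alpha}\text{ or unit interval})$ after a standard Córdoba-type fattening of $\sup_{0<t\le1}$, using that $\widehat f$ is supported in $\mathbb A_R$ so that $e^{it(-\Delta)^{\alpha/2}}f$ is, at scale $R^{-\alpha}$ in $t$, essentially constant up to an acceptable loss. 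Concretely: $\sup_{0<t\le1}|e^{it(-\Delta)^{\alpha/2}}f(x)|\lesssim \big(R^{\alpha}\int_0^1|e^{it(-\Delta)^{\alpha/2}}f(x)|^2\,dt\big)^{1/2}+\text{(derivative term)}$, and one checks that $d\nu:=d\mu(x)\,R^{\alpha}\mathbf 1_{[0,1]}(t)\,dt$ on $\mathbb R^{d+1}$ satisfies $\nu(B((x,t),\rho))\lesssim R^{\alpha}\langle\mu\rangle_\gamma\rho^{\gamma+1}\le \langle\mu\rangle_\gamma\rho^\gamma$ once $\rho\le R^{-\alpha}$, and for $\rho\ge R^{-\alpha}$ the factor $R^\alpha\rho$ is absorbed; so $\langle\nu\rangle_\gamma\lesssim\langle\mu\rangle_\gamma$. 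Then \eqref{est11} applied to this $\nu$, together with the Bernstein gain that turns the $t$-derivative of $e^{it(-\Delta)^{\alpha/2}}f$ into a factor $R^\alpha$ (compensated by the $R^{-\alpha}$ width), yields exactly \eqref{max22} with the same power $R^{\frac{d-\gamma}2+s}$.

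I expect the main obstacle to be the bookkeeping in the fattening step: one must verify that the auxiliary measure $\nu$ on $\mathbb R^{d+1}$ is genuinely $\gamma$-dimensional with $\langle\nu\rangle_\gamma\lesssim\langle\mu\rangle_\gamma$ uniformly in $R\ge1$ (this is where the condition $|I|\sim R^{-\alpha}$ and $\gamma\le d$ are used), and that the passage from the $L^2$-in-$t$ average back to the pointwise supremum costs only a bounded factor — the standard fundamental-theorem-of-calculus trick $|g(x)|^2\le R^{-\alpha}\int_I|g|^2+2\big(\int_I|g|^2\big)^{1/2}\big(\int_I|\partial_t g|^2\big)^{1/2}$ applied with $g=e^{it(-\Delta)^{\alpha/2}}f$ and $\|\partial_t g\|\lesssim R^\alpha\|g\|$ on $\mathbb A_R$, summed over the $O(R^\alpha)$ subintervals $I$ of length $R^{-\alpha}$ covering $[0,1]$ (the summation in $I$ being harmless since after Cauchy–Schwarz each term reappears under a single space-time integral against $\nu$). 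Everything else — linearization, Plancherel on the frequency side — is routine, and the cited references \cite{OO,HKL} presumably carry out precisely this computation, so I would simply invoke that lemma after reducing to it, or reproduce the short argument above.
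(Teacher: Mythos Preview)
Your linearization step is correct, and in fact your \emph{first} idea---pushing $\mu$ forward along the graph $x\mapsto(x,t(x))$---already works without any modification. Your worry that the pushforward ``need not itself be $\gamma$-dimensional if $t(\cdot)$ is wildly oscillating'' is unfounded: for any measurable $t:B(0,1)\to[0,1]$ and any ball $B((y,s),\rho)\subset\mathbb R^{d+1}$, the preimage under the graph map is contained in the $\mathbb R^d$-ball $B(y,\rho)$ (just drop the time coordinate), so
\[
\nu\big(B((y,s),\rho)\big)=\mu\big(\{x:|(x,t(x))-(y,s)|<\rho\}\big)\le \mu\big(B(y,\rho)\big)\le \langle\mu\rangle_\gamma\,\rho^\gamma .
\]
Thus $\langle\nu\rangle_\gamma\le\langle\mu\rangle_\gamma$ regardless of the regularity of $t(\cdot)$, and a single application of \eqref{est11} to this $\nu$ (noting that $\nu$ is supported in $B^d(0,1)\times[0,1]\subset B^{d+1}(0,2)$) gives \eqref{max22}. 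This is precisely the argument in the references \cite{OO,HKL} that the paper cites; the paper itself does not reproduce the proof.

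By contrast, the fattening route you propose instead has a genuine gap. The measure $d\nu=R^\alpha\,d\mu(x)\,\mathbf 1_{[0,1]}(t)\,dt$ is \emph{not} $\gamma$-dimensional uniformly in $R$: for $R^{-\alpha}\le\rho\le 1$ one has
\[
\nu\big(B((y,s),\rho)\big)\le R^\alpha\cdot\langle\mu\rangle_\gamma\rho^\gamma\cdot 2\rho
\]
and $R^\alpha\rho\ge 1$ on this range, so the bound is worse than $\rho^\gamma$, not better; for $\rho\sim 1$ the mass is of order $R^\alpha\langle\mu\rangle_\gamma$. Your claim that ``for $\rho\ge R^{-\alpha}$ the factor $R^\alpha\rho$ is absorbed'' has the inequality backwards. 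Equivalently, if you use the (genuinely $\gamma$-dimensional) product $d\mu\times \mathbf 1_{[0,1]}dt$ without the prefactor $R^\alpha$, then the Sobolev/FTC step costs an unrecoverable $R^{\alpha/2}$. So the detour is both unnecessary and incorrect; simply keep your first paragraph and discard the second.
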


In what follows, we summarize the currently available maximal estimates \eqref{max22} which can be obtained by the best known fractal Strichartz estimates \eqref{est11} combined with Lemma \ref{imply_max}.

\begin{prop} \label{temp max}
Let $d\ge1$, $\alpha\in (0,1)\cup(1,\infty) $,  and $\mu$ be a $\gamma$-dimensional measure in $\mathbb R^d$.
Then \eqref{max22} holds for $s>s_\alpha(\gamma,d)$ where 
\begin{align*}
s_\alpha(\gamma,d)
=
\begin{cases}
\min\big\{ \max\big\{ 0, ~\frac \gamma 2- \frac{d}4 \big\}, ~\frac{\gamma}{2(d+1)} \big\}, &
\quad \text{when }  \alpha>1, 
\\
\min\big\{ \max\{ 0, \alpha(\frac\gamma2-\frac d4) \big\}, \frac \alpha 2\big\}, & \quad \text{when } 0< \alpha< 1.
\end{cases}
\end{align*}
\end{prop}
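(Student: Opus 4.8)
\textbf{Proof proposal for Proposition~\ref{temp max}.}
The plan is to invoke Lemma~\ref{imply_max}, which reduces matters to verifying the fractal Strichartz estimate \eqref{est11} with the claimed exponent $s>s_\alpha(\gamma,d)$ for $\gamma$-dimensional measures $\nu$ in $\mathbb R^{d+1}$ and $\widehat f$ supported in $\mathbb A_R$. Since $s_\alpha(\gamma,d)$ is defined as a minimum of two quantities, it suffices to produce \emph{two} separate bounds of the form \eqref{est11}, one for each quantity inside the minimum, and then take the better of the two. Throughout, one rescales to $R=1$ whenever convenient or works directly on $\mathbb A_R$; the powers of $R$ are dictated by homogeneity and by the $\frac{d-\gamma}2$ shift built into Lemma~\ref{imply_max}.

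The first of the two bounds is the ``trivial'' one coming from Cauchy--Schwarz and the $\gamma$-dimensionality of $\nu$. Writing $e^{it(-\Delta)^{\alpha/2}}f$ as an integral against $\widehat f$ over $\mathbb A_R$, one estimates pointwise $|e^{it(-\Delta)^{\alpha/2}}f(x)|\lesssim R^{d/2}\|f\|_{L^2}$, and integrating $d\nu$ over $B(0,1)\subset\mathbb R^{d+1}$ (which has $\nu$-measure $\lesssim\langle\nu\rangle_\gamma$ when $\gamma\le d+1$, and actually the relevant gain uses $\nu(B(0,1))\lesssim 1$) yields \eqref{est11} with $s$ corresponding to the $\max\{0,\frac\gamma2-\frac d4\}$ branch after the $\frac{d-\gamma}2$ bookkeeping — here the exponent $\alpha$ enters only through the rescaling $t\mapsto R^{-\alpha}t$ when $\alpha<1$, which is why that branch carries the extra factor $\alpha$. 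The second bound is the genuine fractal Strichartz input: for $\alpha>1$ one uses the sharp $L^2$ fractal Strichartz estimate of Du--Zhang \cite{DZ} (together with the elliptic nature of the phase $|\xi|^\alpha$ on $\mathbb A_R$, which for $\alpha>1$ has Hessian comparable to that of $|\xi|^2$ after rescaling, so the $\alpha=2$ theory transfers as in \cite{CK}), giving the $\frac{\gamma}{2(d+1)}$ branch; for $0<\alpha<1$ one uses the corresponding fractal Strichartz estimates from \cite{RVV} (or the references collected in the paper), which after the $t\mapsto R^{-\alpha}t$ rescaling produce the $\frac\alpha2$ branch.

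I would organize the write-up as: (i) reduce to \eqref{est11} via Lemma~\ref{imply_max}; (ii) prove the elementary bound giving the first branch, being careful that the clipping to $0$ occurs exactly when $\frac\gamma2\le\frac d4$; (iii) quote the fractal Strichartz estimate in the appropriate dimensional range and record the scaling computation that converts it into \eqref{est11}, separately for $\alpha>1$ and $0<\alpha<1$; (iv) take the minimum. Steps (ii) and (iv) are routine. The substantive point — and the only place the argument is not bookkeeping — is step (iii): one must cite the correct fractal Strichartz estimate valid on the frequency annulus $\mathbb A_R$ for the non-quadratic phase $|\xi|^\alpha$, and check that the curvature hypotheses behind those estimates (non-vanishing Gaussian curvature of the graph of $|\xi|^\alpha$ on $\mathbb A_R$, uniform after rescaling) indeed hold for all $\alpha\ne1$; this is where the case split $\alpha>1$ versus $0<\alpha<1$ and the appeal to \cite{DZ,CK,RVV} does the real work. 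The $\alpha=1$ case is excluded precisely because the phase then has a flat direction and the curvature-based estimates degenerate.
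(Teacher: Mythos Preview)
There is a genuine gap in step (ii). The pointwise Cauchy--Schwarz bound you describe, $|e^{it(-\Delta)^{\alpha/2}}f(x)|\lesssim R^{d/2}\|f\|_{L^2}$, gives after integration against $\nu$ only $\|e^{it(-\Delta)^{\alpha/2}}f\|_{L^2(B(0,1),d\nu)}\lesssim R^{d/2}\langle\nu\rangle_\gamma^{1/2}\|f\|_{L^2}$, i.e.\ \eqref{est11} with $s=\gamma/2$, not $s=\max\{0,\gamma/2-d/4\}$. You are off by $d/4$. The exponent $\gamma/2-d/4$ (and its clipping at $0$) is \emph{not} elementary: it comes from Mattila's spherical-average decay
\[
\int |\widehat \nu(R\eta)|^2\,d\sigma(\eta)\lesssim I_\gamma(\nu)\,R^{-\min\{\gamma,\,d/2\}},
\]
which, combined with Sj\"olin's improvement to exponent $\gamma-1$, yields \eqref{est11} with $s>(\gamma-\beta)/2$ where $\beta=\max\{\min\{\gamma,d/2\},\gamma-1\}$. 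This is exactly how the paper proceeds (citing \cite{Mattila,S3,Er}), and there is no way to recover the $d/4$ gain from the trivial $L^\infty$ bound.

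Two further issues. First, for $0<\alpha<1$ you point to \cite{RVV}, which concerns the nonelliptic Schr\"odinger operator and does not supply a fractal Strichartz estimate for $|\xi|^\alpha$; the relevant input is again the Sj\"olin--Erdo\u{g}an decay above. Second, your account of how the factor $\alpha$ appears (``only through the rescaling $t\mapsto R^{-\alpha}t$'') misses the real mechanism: after scaling, the estimate one obtains lives on a ball of radius $R^\alpha$ in space, and one must invoke the spatial localization Lemma~\ref{loc} (with $R$ replaced by $R^\alpha$) to pass to the ball of radius $R$ before undoing the scaling. This step is what converts the decay exponent $(\gamma-\beta)/2$ into $\alpha(\gamma-\beta)/2$, and it does not come for free from a change of variables in $t$ alone.
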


When $\alpha>1$,  the estimate \eqref{max22} and \eqref{est11} for $s>s_\alpha(\gamma,d)$ were already obtained in \cite{BBCR, Mattila, DZ}).
The estimate for $0<\alpha<1$ also can be shown by the standard argument in \cite{S3,Er} and Lemma \ref{imply_max}. However, since the latter  case is less well known,  we provide the proof for the convenience of the reader.

\begin{proof}[Proof of  Proposition \ref{temp max} for $0<\alpha <1$]
By Lemma \ref{imply_max} we only need to prove \eqref{est11} for $s>s_\alpha(\gamma,d)$.

Let  $\sigma$ be the surface measure on $\{(\xi, |\xi|^\alpha): 1/2\le |\xi| \le 2\}$.
In \cite{S3, Er}, it was shown  that 
\begin{align}\label{dec}
\int |\widehat \nu(R\eta)|^2	\,d\sigma(\eta) \lesssim I_\gamma(\nu) R^{-\beta}
\end{align}
holds for $\beta=\max\{ \min\{\gamma, \frac{d}2\}, \gamma-1\}$ whenever $\nu$ is a $\gamma$-dimensional measure in $\mathbb R^{d+1}$. 
Here $I_\gamma(\nu)$ denote the $\gamma$-dimensional energy of $\nu$.
Let $\nu_\lambda$ be the rescaled measure defined by the relation \eqref{mu-r} with $d$ replaced by $d+1$. 
Then, it is easy to see (see e.g., \cite{HKL, Harris}) that   \eqref{dec} implies the estimate 
\begin{align}\label{fracR}
\| e^{it(-\Delta)^{\alpha/2} }g\|_{L^2(B(0,\lambda),d \nu_\lambda)} \le C
\lambda^{s}\nm\nu \|g\|_2
\end{align}
for  $s>({\gamma-\beta})/2$ whenever $\nu$ is $\gamma$-dimensional and $\widehat g$ is supported on $\mathbb A_1$ (\cite{Wolff, Er2}).
Therefore, we have  \eqref{fracR}   for $s>s_\alpha(\gamma,d)/\alpha$.

Now we take $\lambda=R^\alpha$ in \eqref{fracR}. Then, 
applying Lemma \ref{loc} with $R$ replaced by $R^\alpha$, we have
\begin{align*}
\| e^{it(-\Delta)^{\alpha/2} }g\|_{L^2(B(0,R)\times [0,R^\alpha],d\nu_{R^\alpha})} \le C 
R^{s} \nm\nu\|g\|_2
\end{align*}
for $s>s_\alpha(\gamma,d)$.
By rescaling $\xi \rightarrow R^{-1}\xi$ and $(x,t)\rightarrow  (Rx, R^\alpha t)$, we see that \eqref{est11} holds
for $s>s_\alpha(\gamma,d)$.
\end{proof}

We recall the maximal estimate \eqref{max22} for the wave operator shown in \cite{BBCR, HKL}. (See also \cite{CLV, Harris} for the fractal Strichartz estimates \eqref{est11}).
\begin{prop}\label{temp max2}
Let $I$ be a subinterval in $[0,1]$.
Then \eqref{max22} holds with $\alpha=1$ for $s>s_1(\gamma,d)$ where
\begin{align*}
s_1(\gamma,d)&=
\begin{cases}
\ \ 0, \ \ \ \ \ & \ \quad 0<\gamma \le \frac{1}2,\\[.5ex]
\ \frac \gamma2-\frac 14,\ \ \ \ \ &  \,  \quad \frac12<\gamma \le 1, \\[.5ex]
\ \  \frac \gamma 4, \ \ \ \ \ &\ \quad 1<\gamma \le 2,\\
\end{cases}
\ \ \quad \text{ for }  d=2; 
\\[.5ex]
s_1(\gamma,d)&=
\begin{cases}
\ \ 0, & \ \ \ \quad 0<\gamma \le \frac{d-1}2,\\[.5ex]
\frac {-d+1+2\gamma}{8}, &\quad \frac{d-1}2<\gamma \le \frac{d+1}2,\\[.5ex]
\ \frac{\gamma-1}{2(d-1)}, &\quad \frac {d+1}2 < \gamma \le d, 
\end{cases}
\quad \text{ for }  d\ge 3.
\end{align*}
\end{prop}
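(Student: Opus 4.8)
The plan is to reduce Proposition \ref{temp max2} to the fractal Strichartz estimate \eqref{est11} with $\alpha=1$ via Lemma \ref{imply_max}, exactly as was done for $0<\alpha<1$ in the proof of Proposition \ref{temp max}. Thus it suffices to establish, for a $\gamma$-dimensional measure $\nu$ in $\mathbb R^{d+1}$ and $\widehat f$ supported in $\mathbb A_R$,
\[
\| e^{it\sqrt{-\Delta}}f\|_{L^2(B(0,1),d\nu)} \le C' \langle\nu\rangle_\gamma^{1/2}\, R^{\frac{d-\gamma}2+s}\|f\|_{L^2}
\]
for $s>s_1(\gamma,d)$, and then invoke Lemma \ref{imply_max} to pass from \eqref{est11} to \eqref{max22}. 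The wave case of \eqref{est11} is already available in the literature: as noted in the remark preceding the proposition, the needed fractal Strichartz bounds are contained in \cite{BBCR, HKL} (and \cite{CLV, Harris}), so the bulk of the work is bookkeeping — matching the known exponents to the piecewise formula for $s_1(\gamma,d)$ in each dimensional regime.

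\textbf{Key steps.} First I would recall the scaling normalization: after the change of variables $\xi\to R^{-1}\xi$, $(x,t)\to(Rx,R t)$ (note $\alpha=1$, so space and time scale identically), the frequency-$R$ estimate on $B(0,1)$ is equivalent to a frequency-$1$ estimate on the ball $B(0,R)\times[0,R]$ against the rescaled measure $\nu_R$ defined as in \eqref{mu-r} (with $d$ replaced by $d+1$). Second, for the unit-frequency estimate I would split according to the dimensional thresholds. In the range $0<\gamma\le \tfrac{d-1}2$ (resp.\ $\gamma\le\tfrac12$ when $d=2$) the bound with $s>0$ is just Lemma \ref{easy} (or the trivial $L^2\to L^\infty$ bound on the frequency-localized piece combined with the $\gamma$-dimensional ball condition), so nothing beyond the elementary lemmas is needed. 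In the intermediate and high ranges, one uses the decay estimate for $\widehat{\nu}$ restricted to the cone $\{(\xi,|\xi|):1/2\le|\xi|\le2\}$, i.e.\ the analogue of \eqref{dec} for the conic surface measure, which gives $\int|\widehat\nu(R\eta)|^2\,d\sigma_{\mathrm{cone}}(\eta)\lesssim I_\gamma(\nu)R^{-\beta(\gamma,d)}$ with $\beta(\gamma,d)$ the sharp exponent from \cite{BBCR} (for $d=2$) and from the restriction/Strichartz estimates for the cone in higher dimensions; translating this to $L^2(d\nu_\lambda)$ bounds exactly as in the passage from \eqref{dec} to \eqref{fracR} yields the unit-frequency estimate for $s>(\gamma-\beta(\gamma,d))/2$. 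Third, I would undo the scaling (using Lemma \ref{loc} with $R$ in place of $R^\alpha=R$ only if a global-in-space intermediate step is convenient — here it is actually not needed since $\alpha=1$ makes the localization trivial) to recover \eqref{est11}, and finally apply Lemma \ref{imply_max}.

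\textbf{Main obstacle.} The genuine content is entirely imported: verifying that the best known cone-related decay exponents $\beta(\gamma,d)$ — in $d=2$ the sharp circular/conic estimates of \cite{BBCR}, and in $d\ge3$ the exponents coming from $L^2$ Strichartz/restriction estimates for the light cone — produce precisely the three-piece formula for $s_1(\gamma,d)$ after the arithmetic $s=(\gamma-\beta)/2$. The transition values $\gamma=\tfrac{d-1}2$, $\tfrac{d+1}2$ (resp.\ $\tfrac12,1,2$ for $d=2$) are exactly where the governing mechanism changes (trivial bound, then the "decay like $R^{-(d-1)/4}$" regime from curvature, then the energy-dominated regime where $\beta=\gamma-1$), so the check is a routine but somewhat tedious case analysis. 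The only mildly delicate point is that for $d\ge3$ the conic surface is only curved in $d-1$ directions (flat along the cone generators), which is why the effective dimension in the curvature-dominated middle range is $d-1$ rather than $d$; keeping track of this correctly is what pins down the denominators $8$ and $2(d-1)$. I do not anticipate any new difficulty beyond correctly citing and assembling these known ingredients.
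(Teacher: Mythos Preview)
The paper does not actually prove Proposition~\ref{temp max2}: it simply recalls the estimate from \cite{BBCR, HKL}, with a parenthetical remark that the corresponding fractal Strichartz bounds \eqref{est11} are in \cite{CLV, Harris}. Your plan --- deduce \eqref{max22} from \eqref{est11} via Lemma~\ref{imply_max}, and obtain \eqref{est11} from the known conical-average decay exponents $\beta(\gamma,d)$ exactly as in the $0<\alpha<1$ proof of Proposition~\ref{temp max} --- is correct and is precisely the route taken in the cited references; your observation that Lemma~\ref{loc} is unnecessary when $\alpha=1$ (space and time rescale identically) is also right. So your approach agrees with the paper's, the only difference being that you spell out the argument while the paper treats the result as a black-box citation.
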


By Proposition \ref{sch}, the estimates in Proposition \ref{temp max} and \ref{temp max2}  give the corresponding estimates for  $\sup_n |e^{it_n(-\Delta)^{\alpha/2}}f|$ with $\{t_n\} \in \ell^{r,\infty}$ 
relative to $\gamma$-dimensional measures.  Then, by a standard argument (see \cite{BBCR}), one can obtain  upper bounds on the Hausdorff dimension of the divergence sets.
We summarize the results as follows: 

\begin{coro}
Let $\alpha>0$, $d\geq1$, $r\in(0,\infty)$, and $0<\gamma\leq d$.
Let $s_\ast=s_\alpha(\gamma,d)$ which is given in Proposition \ref{temp max} and \ref{temp max2}.
Then,
$\mathfrak D^{\alpha,d}(s,r) \leq \gamma$  if
$s> 2^{-1}(d-\gamma) +\min\{s_\ast, \tilde s_\ast\}$.
\end{coro}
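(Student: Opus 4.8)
The plan is to derive the bound on the divergence set $\mathfrak D^{\alpha,d}(s,r)$ as a consequence of the $\gamma$-dimensional maximal estimate for sequential means, which is what Proposition \ref{sch} provides once we feed in the correct input regularity $s_\ast = s_\alpha(\gamma,d)$ from Proposition \ref{temp max} or \ref{temp max2}. First I would fix $\gamma \in (0,d]$, fix a sequence $\{t_n\} \in \ell^{r,\infty}$, and fix $f \in H^s$ with $s > 2^{-1}(d-\gamma) + \min\{s_\ast, \tilde s_\ast\}$, where $\tilde s_\ast$ is defined from $s_\ast$ via \eqref{ssr}. Write $\sigma = s - 2^{-1}(d-\gamma)$, so that $\sigma > \min\{s_\ast,\tilde s_\ast\}$ and $f \in H^{\frac{d-\gamma}2 + \sigma}$. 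By Proposition \ref{temp max} (resp.\ Proposition \ref{temp max2} when $\alpha=1$) combined with Lemma \ref{imply_max}, the hypothesis \eqref{max22} of Proposition \ref{sch} holds for every $s' > s_\ast = s_\alpha(\gamma,d)$ and every $\gamma$-dimensional $\mu$; then Proposition \ref{sch} upgrades this to the sequential maximal estimate \eqref{L2-L2} for all $s' \ge \min\{s_\ast,\tilde s_\ast\}$, in particular for $s' = \sigma$ after choosing $s_\ast$ slightly below $s_\alpha(\gamma,d)$ if needed. (Strictly, since Proposition \ref{temp max} only gives the open range $s'>s_\alpha$, one first lowers $s_\ast$ to some $s_\ast' < s_\alpha$ with $\min\{s_\ast',\tilde s_\ast'\} < \sigma$ still holding, by continuity of $s_\ast \mapsto \tilde s_\ast$; this is the harmless endpoint bookkeeping.)

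Next I would remove the frequency localization. Following exactly the argument in the proof of Corollary \ref{imp-Hs} — or more directly invoking Corollary \ref{imp-Hs} itself with $s_\ast$ replaced by a value strictly between $\min\{s_\ast,\tilde s_\ast\}$'s preimage and $s_\alpha(\gamma,d)$ — we obtain the global estimate
\begin{align*}
\big\| \sup_{n\in\mathbb N} |e^{it_n(-\Delta)^{\alpha/2}} f| \big\|_{L^2(B(0,1),d\mu)} \le C \nm\mu \, \|f\|_{H^{\frac{d-\gamma}2 + \sigma}}
\end{align*}
for every $\gamma$-dimensional measure $\mu$, with $C$ independent of $\mu$ (only depending on $\langle\mu\rangle_\gamma^{1/2}$ through the stated factor). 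This is the measure-weighted sequential maximal bound that a standard argument converts into a dimension estimate for the divergence set.

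Finally I would run the by-now-standard potential-theoretic argument (as in \cite{BBCR}, see also \cite{HKL}). Suppose for contradiction that $\dim_H D^{\alpha,d}(f,\{t_n\}) > \gamma$ for some $f \in H^{\frac{d-\gamma}2+\sigma}$ and some $\{t_n\}\in\ell^{r,\infty}$. By Frostman's lemma there is a probability measure $\mu$ supported in $D^{\alpha,d}(f,\{t_n\}) \cap B(0,1)$ (after a harmless rescaling/translation, which we may assume) that is $\gamma$-dimensional, i.e.\ $\langle\mu\rangle_\gamma < \infty$. Approximating $f$ in $H^{\frac{d-\gamma}2+\sigma}$ by Schwartz functions $g$, for which $e^{it_n(-\Delta)^{\alpha/2}}g \to g$ everywhere, and applying the maximal estimate to $f-g$ shows that the $\mu$-measure of the set where $\limsup_n |e^{it_n(-\Delta)^{\alpha/2}}f - f| > \lambda$ is $\lesssim \lambda^{-2}\|f-g\|_{H^{\frac{d-\gamma}2+\sigma}}^2 \to 0$ for every $\lambda>0$; hence $\mu\big(D^{\alpha,d}(f,\{t_n\})\big)=0$, contradicting that $\mu$ is supported on that set. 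Therefore $\dim_H D^{\alpha,d}(f,\{t_n\}) \le \gamma$ for all such $f$ and all such sequences, which is precisely $\mathfrak D^{\alpha,d}(s,r) \le \gamma$.

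The only genuinely delicate point is the endpoint matching between the open condition $s' > s_\alpha(\gamma,d)$ supplied by Propositions \ref{temp max}--\ref{temp max2} and the closed threshold $\min\{s_\ast,\tilde s_\ast\}$ appearing in Proposition \ref{sch} and Corollary \ref{imp-Hs}; since the statement we must prove is itself an open condition ($s > 2^{-1}(d-\gamma) + \min\{s_\ast,\tilde s_\ast\}$ strictly), one simply picks an auxiliary $s_\ast' < s_\alpha(\gamma,d)$ close enough that $\min\{s_\ast',\tilde s_\ast'\}$ still lies below $\sigma$, using that $\tilde s_\ast$ depends continuously and monotonically on $s_\ast$ via \eqref{ssr}. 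Everything else — Frostman, the density argument, the $L^2(d\mu)$ maximal bound — is routine and appears verbatim in the references cited.
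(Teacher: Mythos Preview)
Your proposal is correct and follows exactly the route the paper sketches: feed Propositions \ref{temp max}--\ref{temp max2} into Proposition \ref{sch} (together with Corollary \ref{imp-Hs} to pass from frequency-localized to $H^s$ data), then run the standard Frostman/density argument of \cite{BBCR}. One small slip in your endpoint bookkeeping: the auxiliary value $s_\ast'$ must be taken slightly \emph{above} $s_\alpha(\gamma,d)$, not below, since Propositions \ref{temp max}--\ref{temp max2} only supply \eqref{max22} for $s>s_\alpha(\gamma,d)$; continuity and monotonicity of $s_\ast\mapsto\tilde s_\ast$ via \eqref{ssr} then still give $\min\{s_\ast',\tilde s_\ast'\}<\sigma$, exactly as you intend.
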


\subsection*{Acknowledgement}
This work was supported by the NRF (Republic of Korea) grants  No. 2020R1I1A1A01072942 (Cho), No. 2022R1I1A1A01055527 (Ko), No. 2022R1F1A1061968(Koh), and No. 2022R1A4A1018904 (Lee).


\end{document}